\newtheorem{defn}{Definition}
\newtheorem{thm}{Theorem}[section]
\newtheorem{lemme}[thm]{Lemma}
\newtheorem{remark}[defn]{Remark}
\newtheorem{note}[defn]{Notations}
\newtheorem{cor}[thm]{Corollary}
\DeclareMathOperator{\Div}{Div}
\DeclareMathOperator{\Id}{Id}
\DeclareMathOperator{\rank}{rank}
\DeclareMathOperator{\supp}{supp}
\DeclareMathOperator{\psh}{psh}
\DeclareMathOperator{\End}{End}
\DeclareMathOperator{\pr}{pr}
\newcommand\sI{{\mathcal I}}
\newcommand\ls{{\epsilon}}
\title[Ohsawa-Takegoshi extension theorem]{Ohsawa-Takegoshi extension theorem for compact K\"ahler manifolds and applications}
\date{\today}
\author{Junyan Cao}
\keywords{Ohsawa-Takegoshi extension, K\"ahler manifolds, singular metric}
\address{Junyan Cao, 
Université Paris 6,  Case 247,  
Institut de Mathématiques de Jussieu, Analyse complexe et géométrie, 
4, Place Jussieu,
France}
\email{junyan.cao@imj-prg.fr}
\begin{document}

\begin{abstract}
Our main goal in this article is to prove an extension theorem
for sections of the canonical bundle of a weakly pseudoconvex K\"ahler manifold
with values in a line bundle endowed with a possibly singular metric.
We also give some applications of our result.
\end{abstract}

\maketitle

\section{Introduction}
 
The $L^2$ extension theorem by Ohsawa-Takegoshi is 
a tool of fundamental importance in algebraic and analytic geometry. After the crucial contribution
of \cite{OT87, Ohs88}, this result 
has been generalized by many authors in various contexts, including 
\cite{Man93}, \cite{Dem00}, \cite{Siu04}, \cite{Ber05}, \cite{Che11}, \cite{Yi12}, \cite{ZGZ12}, \cite{Blo13}, \cite{GZ15a},\cite{Dem15}, \cite{BL16}.
\medskip

\noindent In this article we treat yet another version of the extension theorem in the context of
 K\"ahler manifolds. 
We first state a consequence of our main result; we remark that a version of it was conjectured by Y.-T. Siu in the framework of his work on the invariance of plurigenera. 

\begin{thm}\label{optimalestimateintro}
Let $(X, \omega)$ be a K\"ahler manifold and $\pr: X\rightarrow \Delta$ be a proper holomorphic map 
to 
the ball $\Delta\subset \mathbb{C}^1$ centered at $0$ of radius $R$. 
Let $(L, h)$ be a holomorphic line bundle over $X$ equipped with 
a hermitian metric (maybe singular) $h = h_0 e^{-\varphi_L}$ such that $i\Theta_h (L)\geq 0$ in the sense of currents,
where $h_0$ is a smooth hermitian metric and $\varphi_L$ is a quasi-psh function over $X$.
We suppose that $X_0 := \pr^{-1}(0)$ is smooth of codimension $1$, and that the restriction of 
$h$ to $X_0$ is not identically $\infty$. 

Let $f\in H^0 (X_0 , K_{X_0} \otimes L)$ be a holomorphic section in the multiplier ideal defined by the restriction of $h$ to $X_0$.
Then there exists a section $F \in H^0 (X, K_{X} \otimes L)$ whose restriction to $X_0$ is equal to
$f$, and  
such that the following optimal estimate holds
\begin{equation}
\frac{1}{\pi R^2}\int_{X} |F|_{\omega, h} ^2 d V_{X, \omega} \leq  \int_{X_0} |f|_{\omega, h} ^2 d V_{X_0, \omega}.
\end{equation}
\end{thm}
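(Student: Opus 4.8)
The plan is to reduce the statement to a known $L^2$-extension result with optimal constant on the smooth locus where $h$ is finite, and then to propagate the estimate by a limiting/approximation argument. First I would replace the possibly singular metric $\varphi_L$ by a decreasing sequence of (at worst analytically singular) approximations $\varphi_{L,\ls}$ with $i\Theta \geq -\ls\omega$ and with the same restriction to $X_0$ up to a controlled error, using Demailly regularization adapted to the quasi-psh setting; since the conclusion is an $L^2$ estimate, it suffices to prove it for each $\varphi_{L,\ls}$ and let $\ls\to 0$, invoking monotone convergence on both sides (the left side increases to $\int_X|F|^2e^{-\varphi_L}$, the right side is fixed because $f$ lies in the multiplier ideal of $h|_{X_0}$). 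Exhausting $X$ by relatively compact weakly pseudoconvex open sets $\pr^{-1}(\Delta_r)$ with $r\uparrow R$ and taking a weak limit of extensions $F_r$ — whose norms are uniformly bounded by the right-hand side times $\frac{1}{\pi r^2}\le\frac{1}{\pi R^2}\cdot\frac{R^2}{r^2}$ — then yields $F$ on all of $X$ after letting $r\to R$; the geometry of $\Delta$ (a one-dimensional ball of radius $R$) is exactly what produces the sharp constant $\frac{1}{\pi R^2}$, via the weight $\log|w|$ coming from the coordinate $w$ on $\Delta$ pulled back by $\pr$.

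The heart of the matter is the twisted Bochner–Kodaira–Nakano inequality with the right auxiliary weight. I would work on $X' := \pr^{-1}(\Delta_r)$, which is weakly pseudoconvex and K\"ahler, extend $f$ smoothly to a form $\tilde f$ supported near $X_0$, and solve a $\bar\partial$-equation $\bar\partial u = \bar\partial(\chi\tilde f)$ where $\chi$ is a cutoff in the variable $t=|w|$; the correction $F := \chi\tilde f - u$ is then holomorphic and restricts to $f$ on $X_0$. The key is to run the Ohsawa–Takegoshi–Manivel–Demailly–Berndtsson machinery with the family of weights $\varphi_L + \text{(a function of }t\text{)}$, choosing the functions in the one-variable family so that the curvature term dominates the torsion term \emph{with the optimal ratio}; Berndtsson's and B\l ocki's optimal-constant arguments, or the Guan–Zhou approach via an ODE for the auxiliary functions, give the constant $1$ in front of $\int_{X_0}|f|^2e^{-\varphi_L}$ and, after the $t$-integration over $\Delta_r$, the Lebesgue-measure factor $\frac{1}{\pi r^2}$. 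One must be careful that $X'$ is only weakly pseudoconvex, not Stein, so the $\bar\partial$-solving step requires the K\"ahler hypothesis and a further exhaustion of $X'$ by sublevel sets of a plurisubharmonic exhaustion function, together with the standard density argument to reduce to smooth forms.

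The main obstacle I anticipate is the \textbf{non-Stein, merely K\"ahler} nature of $X$: the classical Ohsawa–Takegoshi proof uses the Stein (or projective) structure crucially, both to solve $\bar\partial$ with $L^2$ bounds on noncompact pieces and to regularize the singular metric without losing positivity. Here one has only a K\"ahler form $\omega$, which need not be complete nor integral, so I expect the real work to be (i) establishing the $\bar\partial$-solvability with estimates on $\pr^{-1}(\Delta_r)$ for a \emph{nef}-type (only $i\Theta\ge -\ls\omega$) twisted bundle — presumably by passing to a complete K\"ahler metric on the relevant sublevel sets and a delicate limiting argument à la Demailly — and (ii) controlling the regularization $\varphi_{L,\ls}$ so that its restriction to $X_0$ still dominates $\varphi_L|_{X_0}$ closely enough that no constant worse than $1$ creeps into the right-hand side in the limit. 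Granting the $\bar\partial$-estimate on K\"ahler weakly pseudoconvex manifolds (which the earlier part of the paper presumably sets up), the optimal constant then follows formally from the one-variable computation on $\Delta$, so the genuinely new input is entirely in the functional-analytic step over a K\"ahler base.
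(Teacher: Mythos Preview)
Your broad strategy---regularize $\varphi_L$ to $\varphi_k$ with analytic singularities and curvature $\geq -\delta_k\omega$, run the Guan--Zhou twisted $L^2$ machinery with the ODE weights, and pass to a weak limit---is indeed what the paper does. The gap is that you treat the $\bar\partial$-step under the curvature loss $-\delta_k\omega$ as a routine technicality to be ``granted''; this is in fact the whole content of the theorem. Concretely, the twisted estimate on the complement of the singular set of $\varphi_k$ (which does carry a complete K\"ahler metric, so that part is standard) does \emph{not} solve $\bar\partial\gamma=g_{m,k}$. Because the curvature operator satisfies only $B_{m,k}\geq (b_m''\circ\Psi)[\partial\Psi\wedge\bar\partial\Psi,\Lambda_\omega]-\delta_k\eta_m\,\Id$, one gets instead a decomposition
\[
g_{m,k}=\bar\partial\gamma_{m,k}+(\delta_k\eta_m)^{1/2}\beta_{m,k},
\]
with $\|\beta_{m,k}\|$ uniformly bounded in a weighted $L^2$ norm. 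The twist $\eta_m(z)\sim C\min\{|\ln|v||,\,m\}$ is \emph{not} uniformly bounded in $m$, so the error $(\delta_k\eta_{m_k})^{1/2}\beta_{m_k,k}$ does not go to zero in $L^2$. The weak limit $F$ of $(1-b_{m_k}'\circ\Psi)\widetilde f_k-\gamma_{m_k,k}$ is then holomorphic and satisfies the optimal estimate, but there is no a priori reason for $F|_{X_0}=f$. This is exactly the obstacle that forced Li to assume $f\in\mathcal I_+(h|_{X_0})$ rather than $\mathcal I(h|_{X_0})$; your ``delicate limiting argument \`a la Demailly'' does not get past it.

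The paper's new idea, which your outline misses, is to abandon $L^2$ control of the error and instead build, via the Bochner--Martinelli--Koppelman integral formula on small Stein charts, a local $\bar\partial$-primitive $w_{m_k,k}$ of $\eta_{m_k}^{1/2}\beta_{m_k,k}$ satisfying the scaled $L^1$ bound $\int_{B_\tau}|w_{m_k,k}|\leq C\,\tau^{2r}$, uniformly in $k$ and in the tube radius $\tau$ around $X_0$. That scaling is precisely what lets one compare the genuinely holomorphic local sections $F_k:=(1-b_{m_k}'\circ\Psi)\widetilde f_k-\gamma_{m_k,k}-\delta_k^{1/2}w_{m_k,k}$ with the trivial extension of $f$ on shrinking tubes $B_{\tau_k}$ and conclude $F|_{X_0}=f$. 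So the ``genuinely new input'' is not, as you suggest, a functional-analytic $\bar\partial$-estimate on K\"ahler manifolds, but rather this $L^1$ kernel argument controlling the restriction of the limit section.
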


\noindent We note that the volume form $|F|_{\omega, h} ^2  d V_{X, \omega}$ is independent of choice of the metric $\omega$, and $d V_{X_0, \omega}$
is the volume form on $X_0$ induced by the metric $\omega |_{X_0}$.

\medskip

If the manifold $X$ is isomorphic to the product $X_0\times \Delta$
and if the line bundle $L$ is trivial, then it is clear how to construct $F$. If not, the existence of an extension verifying the estimate above is quite 
subtle, and it has many important applications. The result above is proved by combining the arguments in \cite{Blo13}, \cite{GZ15a} and \cite{Yi12}. 
Comparing to \cite{Blo13, GZ15a}, the new input here is that we allow the metric $h$ of $L$ to be singular,
while the ambient manifold is only assumed to be K\"ahler. This general context leads to rather
severe difficulties, mainly due to the loss of positivity in the process of regularizing the metric $h$
which adds to the intricate relationship between the several parameters involved in the proof.  
We use here the arguments in \cite{Yi12} to overcome the difficulties.

Before stating the main result of this paper in its most general form and explaining the main ideas in the proof, 
we note the following consequence of Theorem \ref{optimalestimateintro} by an idea of H. Tsuji.
It is a generalization of \cite[Thm 0.1]{BP10} to arbitrary compact K\"ahler families, which follows from 
our main theorem and the arguments in \cite[Cor 3.7]{GZ15a}.

\begin{thm}\label{introprop}
Let $p: X\rightarrow Y$ be a fibration between two compact K\"ahler manifolds.
Let $L\rightarrow X$ be a line bundle endowed with a metric (maybe singular) $h = h_0 e^{-\varphi_L}$ such that $i\Theta_h (L)\geq 0$ in the sense of currents,
where $h_0$ is a smooth hermitian metric and $\varphi_L$ is a quasi-psh function over $X$.
Suppose that there exists a generic point $z\in Y$ and a section $u\in H^0 (X_z, m K_{X/Y}+L)$ such that 
$$\int_{X_z} |u|_{\omega, h} ^{\frac{2}{m}} d V_{X_z ,\omega} < +\infty .$$
Then the line bundle $m K_{X/Y} +L$ admits a metric with positive curvature current. 
Moreover, this metric equals to the fiberwise $m$-Bergman kernel metric on the generic fibers of $p$.
\end{thm}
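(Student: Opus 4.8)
The strategy is to build the desired metric on $mK_{X/Y}+L$ as (the upper-semicontinuous regularization of) the fiberwise $m$-Bergman kernel metric, and to prove its positivity by exhibiting it as an infimum of plurisubharmonic weights coming from extending $m$-canonical sections from nearby fibers. Concretely, for a generic point $y\in Y$ where the fiber $X_y$ is smooth and carries a section $u$ with $\int_{X_y}|u|^{2/m}e^{-\varphi_L/m}<\infty$, one defines at a point $x\in X_y$
\[
e^{-\varphi_B(x)} \;=\; \sup\Big\{ |s(x)|^2 \;:\; s\in H^0(X_y, mK_{X/Y}+L),\ \int_{X_y}|s|^{2/m}e^{-\varphi_L/m}\le 1 \Big\},
\]
the $m$-Bergman kernel metric of the fiber. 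The content of the theorem is that $x\mapsto \varphi_B(x)$, suitably regularized over $X$, is plurisubharmonic, i.e. $i\Theta(mK_{X/Y}+L)\ge 0$ in the sense of currents.

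The heart of the argument is a Bergman-kernel version of the Ohsawa–Takegoshi extension theorem available from the main result of the paper (applied in the form of Theorem \ref{optimalestimateintro}): given a generic fiber $X_{y_0}$ and an $m$-canonical section $s_0$ on it with finite $2/m$-norm against $e^{-\varphi_L/m}$, one wants to extend $s_0$ to an $m$-canonical section on the total space (or on $p^{-1}$ of a disc through $y_0$) with controlled $L^{2/m}$ norm. Since Ohsawa–Takegoshi as stated handles $L^2$ rather than $L^{2/m}$ sections, the reduction is the standard one used in \cite{BP10} and \cite[Cor 3.7]{GZ13}: one writes $mK_{X/Y}+L = K_{X/Y} + \big((m-1)K_{X/Y}+L\big)$ and equips the line bundle $(m-1)K_{X/Y}+L$ with the singular metric $|s_0|^{-2(m-1)/m}e^{-\varphi_L/m}$ twisted appropriately by the relative $m$-Bergman kernel metric on neighbouring fibers; the curvature of this metric is $\ge 0$ precisely because the fiberwise $m$-Bergman metric has semipositive curvature along the fibers and because $\varphi_L$ has semipositive curvature, so Theorem \ref{optimalestimateintro} applies. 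This produces an extension $F$ with $\frac{1}{\pi R^2}\int |F|^2 e^{-\psi} \le \int_{X_{y_0}}|s_0|^2 e^{-\psi}$ for the relevant weight $\psi$, which after unwinding the definitions gives a holomorphic extension of $s_0$ whose fiberwise $m$-Bergman norms on nearby fibers are controlled by that on $X_{y_0}$. By the extremal characterization of the Bergman kernel this yields exactly the subaveraging / plurisubharmonicity inequality for $\varphi_B$ in the disc direction, and letting the disc shrink and using that $Y$ is covered by such discs through generic points gives $i\Theta(mK_{X/Y}+L)\ge 0$ on the locus where $p$ is a submersion and $\varphi_L$ is not identically $+\infty$ on the fiber. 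The metric then extends across the (analytically thin) bad locus by the standard Hartogs-type extension of psh functions bounded above, and by construction its restriction to a generic fiber is the $m$-Bergman kernel metric.

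The main obstacle, and the reason this does not follow from the classical Ohsawa–Takegoshi theorem, is exactly the point emphasized in the paper: the fibers are only K\"ahler, not projective, and the metric $\varphi_L$ is genuinely singular, so one cannot appeal to the algebraic/projective versions used in \cite{BP10}. Ensuring that the twisting metric $|s_0|^{-2(m-1)/m}e^{-\varphi_L/m}$ on $(m-1)K_{X/Y}+L$ has semipositive curvature \emph{as a current on the total space} — not just fiberwise — requires the positivity of the relative $m$-Bergman kernel metric itself, which is a priori what we are trying to prove; this circularity is broken by working on a disc and using a limiting/approximation argument (regularize $\varphi_L$, do the construction, and pass to the limit), carefully tracking the several parameters, just as in the proof of the main theorem. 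Once the extension theorem of the paper is in hand, the remaining steps are the formal Bergman-kernel manipulations of \cite[Cor 3.7]{GZ13}, which I would simply cite.
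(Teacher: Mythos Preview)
Your overall architecture is correct and matches the paper: define the fiberwise $m$-Bergman kernel metric, prove plurisubharmonicity of its local potential by a subaveraging inequality over discs in $Y$, obtain that inequality from an optimal $L^{2/m}$ extension of the extremal section, and extend across the bad locus by upper-boundedness. Where your outline diverges from the paper, and where it has a genuine gap, is the mechanism for producing the \emph{optimal} $L^{2/m}$ extension.

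You propose to equip $(m-1)K_{X/Y}+L$ with the weight $|s_0|^{-2(m-1)/m}e^{-\varphi_L/m}$ ``twisted by the relative $m$-Bergman kernel metric on neighbouring fibers'', acknowledge the circularity, and resolve it by ``regularize $\varphi_L$, do the construction, pass to the limit''. This does not work: regularizing $\varphi_L$ has no bearing on the circularity, since the missing ingredient is positivity of the Bergman kernel metric in the \emph{horizontal} direction, which is exactly the conclusion; and $|s_0|^{-2(m-1)/m}$ is only defined on the central fiber, so it is not a metric on the total space at all. The paper (following \cite[A.1]{BP10}) breaks the circularity by a completely different and non-circular device, reproduced as Corollary~\ref{extensionoptimalm}: one first shows, via coherence of the sheaf $\mathcal I_m(\varphi_L)$ (Lemma~\ref{coherent}, which uses \cite{GZ13B}) and generic constancy of the rank of the direct image, that \emph{some} extension $F$ with finite $L^{2/m}$ norm exists over a disc; then one applies Theorem~\ref{optimalestimateintro} with the weight $\tfrac{m-1}{m}\ln|F|^2+\tfrac{1}{m}\varphi_L$ on $(m-1)K_X\otimes L$ to get a new extension $F_1$, uses H\"older to see that its $L^{2/m}$ norm decreases, and iterates. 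The limit of this sequence is the optimal extension. No positivity of the Bergman kernel metric is assumed at any stage, and no regularization of $\varphi_L$ is needed here (that was already absorbed into the proof of Theorem~\ref{mainthmoptimal}). Once you have this iterative step, the remaining Bergman-kernel manipulations are indeed those of \cite[Cor~3.7]{GZ13}, as you say.
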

\smallskip

\noindent We note that the original proof of the theorem above in the projective case
does not go through in the K\"ahler case. This is due to the fact that in \cite[Thm 0.1]{BP10}
the authors are using in an essential manner the existence of Zariski dense open subsets of $X$.
\medskip

We will state next our general version of Theorem \ref{optimalestimateintro}; prior to this, we introduce some auxiliary weights, following \cite{Blo13}, \cite{GZ15a}.

\begin{note} 
Given $\delta >0$ and $A\in \mathbb{R}$, 
let $c_A (t)$ be a positive smooth function on $(-A, +\infty)$ such that $\int_{-A} ^{+\infty} c_A (t) e^{-t} d t < +\infty$.
Set 
$$u(t) = -\ln (\frac{c_A (-A) e^{A}}{\delta} +\int_{-A} ^t c_A (t_1) e^{-t_1} dt_1) ,$$
and
$$s (t) = \frac{\int_{-A} ^t e^{-u(t_1)} d t_1 + \frac{c_A (-A) e^A}{\delta^2}}{ e^{-u(t)}}.$$
Then $u (t)$ and $ s(t)$ satisfy the ODE equations:
\begin{equation}\label{ODEequation1}
(s (t)+ \frac{(s' (t))^2}{u''(t) s (t) -s'' (t)}) e^{u (t) - t}=\frac{1}{c_A (t)}
\end{equation}
and
\begin{equation}\label{ODEequation2}
s'(t) - s(t) u' (t)=1.
\end{equation}
We suppose moreover that 
\begin{equation}\label{addcondition}
e^{- u(t)} \geq c_A (t) s(t) \cdot e^{-t}\qquad \text{for every }t\in (-A, +\infty) .
\end{equation}
\end{note}

\begin{remark}\label{defnweight}
If $c_A (t)\cdot e^{-t}$ is decreasing, then \eqref{addcondition} is automatically satisfied.
Moreover, by the construction of $u (t), s(t)$, we know that
\begin{equation}\label{estimateu}
\lim\limits_{t\rightarrow +\infty} u (t) =  -\ln (\frac{c_A (-A) e^{A}}{\delta} +\int_{-A} ^{+\infty} c_A (t_1) e^{-t_1} dt_1) < +\infty
\end{equation}
and 
\begin{equation}\label{estimates}
|s (t)|\leq  C_1 |t| + C_2 
\end{equation}
for two constants $C_1, C_2$ independent of $t$.
\end{remark}

\medskip 

\noindent In this set-up, by combining the arguments in \cite{GZ15a} and \cite{Yi12}, we can prove the main result of the present paper:
 
\begin{thm}\label{mainthmoptimal}
Let $(X,\omega )$ be a weakly pseudoconvex $n$-dimensional K\"ahler manifold and 
$E$ be a vector bundle of rank $r$ endowed with a smooth metric $h_E$.
Let $Z\subset X$ be the zero locus of $v\in H^0 (X, E)$. 
We assume that $Z$ is smooth of codimension $r$ and $|v|_{h_E} ^{2r} \leq e^A$ for some $A\in\mathbb{R}$.
Set $\Psi (z) := \ln |v|_{h_E} ^{2r}$.

Let $L$ be a line bundle on $X$ equipped with a singular metric $h:= h_0 \cdot e^{-\varphi}$
such that $i \Theta_{h} (L) \geq \gamma$ for some continuous $(1,1)$-form $\gamma$, where $h_0$ is a smooth metric on $L$.
We assume that there exists
a sequence of analytic approximations $\{ \varphi_k \}_{k=1}^{\infty}$ of $\varphi$ such that
\footnote{If $X$ is compact, such approximation always exists, cf. \cite[Chapter 13]{Dem12}}
\begin{equation}\label{approximation}
i \Theta_{h_0 \cdot e^{-\varphi_k}} (L)\geq \gamma -\frac{\omega}{k} .
\end{equation}
We suppose that there exists a continuous function $a(t)$on $(-A, +\infty]$, such that $0 < a(t) \leq s(t)$ and
\begin{equation}\label{addedlast}
a(-\Psi) (\gamma +  i d'd'' \Psi ) +  i d' d'' \Psi \geq 0.
\end{equation}

Then for every $f\in H^0 (Z, K_X\otimes L \otimes \sI (\varphi|_Z))$\footnote{$\sI (\varphi|_Z)$ is the multiplier ideal sheaf on $Z$ associated 
to the weight $\varphi|_Z$}, there exists a $F\in H^0 (X, K_X\otimes L)$ such that 
$F\mid_Z =f$ and
\begin{equation}\label{normcontrol}
\int_X c_A (-\Psi ) |F|_{\omega, h} ^2 dV_{X,\omega} 
\leq e^{-\lim\limits_{t\rightarrow +\infty} u(t)} \int_Z \frac{|f|_{\omega, h} ^2 } {|\Lambda^r (d v)|^2} dV_{Z,\omega} ,
\end{equation}
where the weight $|\Lambda^r (d s)|^2$ is defined as the unique function such that
$$\int_Z \frac{G } {|\Lambda^r (d v)|^2} dV_{Z,\omega} 
=\lim_{m\rightarrow +\infty} \int_{ -m-1\leq \ln |v|^{2r} _{h_E} \leq -m} \frac{G}{|v|^{2r} _{h_E}} dV_{X,\omega} \qquad\text{for every }G\in C^{\infty}(X).$$
\end{thm}

\medskip

\begin{remark}\label{specialcaseremark}
As already pointed out in \cite{GZ15a}, by taking $E=\pr^* \mathcal{O}_{\Delta}$, $v=\pr^* z$, $A=\ln R^2$,
$c_A (t)\equiv 1$ and letting $\delta\rightarrow +\infty$,
Theorem \ref{mainthmoptimal} implies Theorem \ref{optimalestimateintro}.
\end{remark}

\bigskip

We comment next a few results at the foundation of Theorem \ref{mainthmoptimal}. 
The original Ohsawa-Takegoshi extension theorem \cite{OT87} deals with the local case,
i.e. $X$ is a pseudoconvex domain in $\mathbb{C}^n$.
The potential applications of this type of results 
in global complex geometry become apparent shortly after the article \cite{OT87}
appeared, and to this end it was necessary 
to rephrase it in the context of manifolds.
As far as we are aware, the first global version is due to L. Manivel \cite{Man93}.
We quote here a simplified version of his result.

\begin{thm}\label{smoothcase}\cite[Thm 2]{Man93}
Let $X$ be a $n$-dimensional Stein manifold,
and $E $ be a holomorphic vector bundle over $X$ of rank $r$ with a smooth metric $h_E$.
Let $Y\subset X$ be the zero locus of $s\in H^0 (X, E)$. 
We assume that $Y$ is smooth and of codimension $r$. 
Let $\Omega$ be a $(1,1)$-closed semi-positive form on $X$ such that 
$$\Omega\otimes \Id_E \geq i\Theta_{h_E} (E)$$
in the sense of Griffiths, i.e., $\Omega\otimes \Id_E - i\Theta_{h_E} (E)$ is semipositive on the vectors 
$\xi \otimes s \in T_X \otimes \End (E)$ for every $\xi \in T_X$ and $s\in \End (E)$.

Let $(L, h_0)$ be a line bundle on $X$ equipped with a smooth metric $h_0$,
such that there exists a constant $\alpha > 0$ satisfying  
$$i \Theta_{h_0} (L) \geq \alpha \Omega - r i d'd'' \ln |s|^2 _{h_E} .$$
Then for every $f\in H^0 (Y, K_Y\otimes L\otimes (\det E)^{-1})$, 
there exists a section $F\in H^0 (X, K_X\otimes L)$ such that 
$F\mid_Y =f \wedge (\wedge^r ds)$ and
\begin{equation}\label{l2}
 \int_X \frac{|F|_{\omega, h_0} ^2}{|s|_{h_E} ^{2r-2} ( 1+|s|_{h_E} ^2)^{\beta}} dV_{X,\omega}\leq 
 C \int_Y |f|_{\omega, h_0} ^2  dV_{Y,\omega},
\end{equation}
where $C$ is a numerical constant depending only on $r$, $\alpha$ and $\beta$.
\end{thm}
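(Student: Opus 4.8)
The plan is to follow the classical Ohsawa--Takegoshi--Demailly scheme: produce a smooth extension of $f\wedge(\wedge^r ds)$ supported near $Y$, cut it off, and correct it by the minimal solution of a $\bar\partial$-equation taken against a weight that simultaneously yields the $L^2$ bound and forces the correction to vanish along $Y$. First I would reduce to the case of a relatively compact Stein domain: $X$ carries a smooth plurisubharmonic exhaustion, so one writes $X=\bigcup_j X_j$ with $X_j\Subset X_{j+1}$ Stein, proves the bound on each $X_j$ with a constant $C=C(r,\alpha,\beta)$ independent of $j$, and passes to a weak-$*$ limit together with Fatou's lemma. Set $\psi:=\ln|s|^2_{h_E}$, a quasi-psh function with logarithmic poles exactly along $Y$. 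Since $Y=\{s=0\}$ is smooth of codimension $r$, the differential $ds$ identifies the normal bundle $N_{Y/X}$ with $E|_Y$, so $\wedge^r ds$ gives a canonical isomorphism $K_X|_Y\otimes L\cong K_Y\otimes L\otimes(\det E)^{-1}$; under it $f\wedge(\wedge^r ds)$ is a section of $K_X|_Y\otimes L$, the object to be extended. I would then fix a holomorphic extension $\tilde f$ of this section to a neighbourhood $U$ of $Y$ (available since $X$ is Stein), a cutoff $\theta$ supported in $U$ with $\theta\equiv1$ near $Y$, and for $\epsilon>0$ replace $\psi$ by the smooth function $\psi_\epsilon:=\ln(|s|^2_{h_E}+\epsilon^2)$.

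The heart of the matter is the twisted Bochner--Kodaira--Nakano inequality: for suitable positive functions $\eta=\eta(\psi_\epsilon)$ and $\lambda=\lambda(\psi_\epsilon)$ and every $(n,1)$-form $w$ with values in $L$,
\[
\int_X\big\langle\big(\eta\,i\Theta(L)-i\partial\bar\partial\eta-\lambda^{-1}\,i\partial\eta\wedge\bar\partial\eta\big)w,w\big\rangle\,dV\ \le\ \int_X(\eta+\lambda)\,|\bar\partial^{*}w|^{2}\,dV+\int_X\eta\,|\bar\partial w|^{2}\,dV,
\]
where $\Theta(L)$ is the curvature of $L$ endowed with the twisted metric $h_0\,e^{-r\psi_\epsilon}(1+|s|^2_{h_E})^{1-\beta}$, all norms and $dV$ referring to this metric and to $\omega$. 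The delicate point is to choose $\eta,\lambda$ — essentially Manivel's and Demailly's choices — so that the curvature operator on the left stays bounded below by a positive multiple of $\eta\,\alpha\Omega$. Here the hypotheses enter: the effective curvature contains $i\Theta_{h_0}(L)+r\,i d'd''\psi_\epsilon$, and one combines $i\Theta_{h_0}(L)\ge\alpha\Omega-r\,i d'd''\ln|s|^2_{h_E}$ with the Griffiths-positivity estimate $i d'd''\psi_\epsilon\ge-\tfrac{|s|^2_{h_E}}{|s|^2_{h_E}+\epsilon^2}\,\Omega$ — deduced from $\Omega\otimes\Id_E\ge i\Theta_{h_E}(E)$ by the standard Cauchy--Schwarz computation of $i d'd''\ln|s|^2_{h_E}$ — modulo terms that tend to $0$ as $\epsilon\to0$; the positivity contributed by the factor $(1+|s|^2_{h_E})^{1-\beta}$ (for $\beta$ large) absorbs the residual error.

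Applying the a priori estimate one solves $\bar\partial u_\epsilon=\bar\partial(\theta\tilde f)$ with minimal weighted norm, obtaining $u_\epsilon$ whose weighted $L^2$-norm is controlled by the $\bar\partial\theta$-term (supported where $\nabla\theta\ne0$). Since $\theta\equiv1$ near $Y$, each $u_\epsilon$ is holomorphic there, so $F_\epsilon:=\theta\tilde f-u_\epsilon$ is already a \emph{global} holomorphic section of $K_X\otimes L$ ($\bar\partial$-closed by construction, regular by ellipticity). The weight carries a logarithmic pole of the right order along $Y$, so that as $\epsilon\to0$ a weak limit $u$ of the $u_\epsilon$ satisfies the borderline bound $\int_X|u|^2_{h_0}\,|s|^{-2r}_{h_E}(1+|s|^2_{h_E})^{1-\beta}\,dV_{X,\omega}<+\infty$; since the last two factors are bounded near $Y$ and $(|z'|^2)^{-r}$ is not integrable transversally to $Y$ in codimension $r$, this forces $u|_Y=0$. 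Hence $F:=\theta\tilde f-u$ is holomorphic with $F|_Y=\tilde f|_Y=f\wedge(\wedge^r ds)$. Finally \eqref{l2} follows by writing $F=\theta\tilde f-u$: the contribution of $u$ is dominated — via $|s|^{2}_{h_E}\le 1+|s|^2_{h_E}$, which is precisely why the factor $(1+|s|^2_{h_E})^{\beta}$ appears in \eqref{l2} — by the borderline bound just obtained, while the contribution of $\theta\tilde f$ localizes to a tubular neighbourhood of $Y$ and, by a coarea computation transverse to $Y$ (integrating $(|z'|^2)^{-(r-1)}$ over a transverse ball), yields a term $\le C(r,\beta)\int_Y|f|^2_{h_0}\,dV_{Y,\omega}$; all numerical constants are absorbed into $C=C(r,\alpha,\beta)$.

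\textbf{Main obstacle.} The crux is the joint choice of $\eta$ and $\lambda$ (equivalently, the profile of the twisted metric): they must be engineered so that the twisted curvature operator stays nonnegative using \emph{only} the curvature hypotheses on $L$ and $E$, while the companion weight is strong enough near $Y$ to kill the correction $u$ there, yet mild enough that the final constant depends on nothing beyond $r,\alpha,\beta$ and that the right-hand side of \eqref{l2} remains finite. Everything else — tracking the error terms generated by the regularization $\psi_\epsilon$ uniformly as $\epsilon\to0$, and identifying the $\bar\partial\theta$-contribution with $\int_Y|f|^2_{h_0}\,dV_{Y,\omega}$ through the transverse integration — is standard but accounts for the bulk of the computation.
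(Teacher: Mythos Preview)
The paper does not contain a proof of this statement: Theorem~\ref{smoothcase} is quoted from \cite[Thm~2]{Man93} as background material in the introduction, with no argument given beyond the citation and Remark~\ref{adddDem}. There is therefore nothing in the paper to compare your proposal against. That said, your outline is a faithful sketch of the classical Manivel--Demailly scheme (smooth extension, cutoff, twisted Bochner--Kodaira--Nakano with a carefully engineered pair $(\eta,\lambda)$, regularization $\psi_\epsilon$, and the borderline-weight argument forcing the correction to vanish on $Y$), and you have correctly identified the genuine crux as the design of $(\eta,\lambda)$ so that the curvature operator stays nonnegative using only the hypotheses on $L$ and $E$.
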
 

\begin{remark}\label{adddDem}
Theorem \ref{smoothcase} can 
be easily generalized to the case when $X$ is a weakly pseudoconvex K\"ahler manifold and 
the weight function $|s|_{h_E} ^{2r-2} ( 1+|s|_{h_E} ^2)^{\beta}$ can be ameliorated by 
$|s|_{h_E} ^{2r} ( \ln |s|_{h_E}) ^2$, cf. \cite[Thm 12.6]{Dem12}.
\end{remark}

One of the important limitations of Theorem \ref{smoothcase}
is that the metric $h_0$ is assumed to be smooth. 
Indeed this is unfortunate, given that in the usual set-up of algebraic geometry one has to deal
with extension problems for canonical forms with values in pseudo-effective line bundles.
A famous example is the invariance of plurigenera for projective manifolds (\cite{Siu04}): 
one needs an extension theorem under the hypothesis that the metric $h_0$ has arbitrary singularities.
We remark that the proof of the extension theorem used in the article mentioned above is confined to the case of projective manifolds. Thus,
in order to generalize \cite{Siu04} to compact K\"ahler manifolds, the first step 
would be to allow the metric $h_0$ in Theorem \ref{smoothcase}
to have arbitrary singularities.

Among the very few results in this direction we mention the important work of L. Yi.
In order to keep the discussion simple, we restrict ourselves to the setup in
Theorem \ref{optimalestimateintro}.
Let $\mathcal{I}_+ (h) :=\lim\limits_{\delta\rightarrow 0^+} \mathcal{I} (h ^{1+\delta})$.
L. Yi \cite{Yi12} established Theorem \ref{optimalestimateintro} \footnote{\cite{Yi12} proved it in a more general setting.} 
for sections $f$ which belong to the augmented multiplier ideal sheaf $\mathcal{I}_+ (h)$.
Guan and Zhou \cite{GZ15b} (cf. also Hiep \cite{Hie14}) 
showed that $\mathcal{I}_+ (h)= \mathcal{I} (h)$. Thus, the conjunction of these two
results as well as the optimal extension \cite{GZ15a} establish Theorem \ref{optimalestimateintro}.
The proof of our main theorem is mainly based on the arguments in \cite{GZ15a} and \cite{Yi12}.

\begin{remark}
In the situation of Theorem \ref{mainthmoptimal}, if we take the weight function $c_A (t) \equiv 1$, 
then we have Theorem \ref{optimalestimateintro}. 
There is another weight function which might be useful.
If we take $c_A (t)= \frac{e^t}{(t+A +c)^2}$ for some constant $c >0$, thanks to Remark \ref{defnweight}, \eqref{addcondition} is satisfied.
Using this weight function, \cite[Thm 3.16]{GZ15a} proved an optimal estimate version of Theorem \ref{smoothcase} and its remark \ref{adddDem}.
Thanks to Theorem \ref{mainthmoptimal}, we know that \cite[Thm 3.16]{GZ15a} is also true for weakly pseudoconvex K\"ahler manifolds 
under the approximation assumption \eqref{approximation}.
\end{remark}

\noindent {\bf Acknowledgements: } I would like to thank H. Tsuji who brought me attention to this problem during the Hayama conference 2013.
I would also like to thank M. P\u{a}un for pointing out several interesting applications, 
and a serious mistake in the first version of the article.
I would also like to thank J.-P. Demailly and X. Zhou for helpful discussions.
Last but not least, I would like to thank the anonymous referee for excellent suggestions about this work.

\section{Proof of Theorem \ref{mainthmoptimal} }\label{II}

\begin{proof}[Proof of Theorem \ref{mainthmoptimal}]
The constants $C_1, C_2,\cdots $ below are all independent of $k$.
The proof follows closely \cite{GZ15a} and \cite{Yi12}.
To begin with, we introduce several notations.
In the setting of Theorem \ref{mainthmoptimal}, 
for every $m\in \mathbb{R}$ fixed, we can define a $C^1$-function $b_{m}$ on $\mathbb{R}$ such that 
$$b_{m} (t) =t \text{ for } t\geq -m \qquad \text{and}\qquad b_{m} '' (t) = \textbf{1}_{\{-m -1\leq t\leq -m \}} .$$
Then 
\begin{equation}\label{estimatebm}
b_m (t) \geq t\qquad\text{and} \qquad b_m (t) \geq -m-1 \qquad\text{for every }t\in \mathbb{R}^1. 
\end{equation}
Let $s, u$ be the two functions defined in the introduction.
Set $\chi_m (z) := -b_m \circ \Psi$, $\eta_{m} (z) := s \circ \chi_m $ and $\phi_m (z) := u \circ \chi_m$.
Thanks to \eqref{estimateu} and \eqref{estimates}, we have
\begin{equation}\label{estimateweight1}
|\phi_m (z)|\leq C_1
\end{equation}
and 
\begin{equation}\label{estimateweight2}
 |\eta_m (z)|\leq C_2 |\chi_m (z)| + C_3 \leq C_2 \cdot \min  \{2r |\ln |v|_{h_E}|, m +1 \} +C_3.
\end{equation}
Set $\lambda_m (z):=\frac{(s') ^2}{u'' s-s''} \circ \chi_m$,
$h_k := h_0 \cdot e^{-\varphi_k}$ and $\widetilde{h}_{m, k} := h_k \cdot e^{-\Psi -\phi_m}$.
By \eqref{ODEequation1}, we have
\begin{equation}\label{importantequality11}
c_A (\chi_m) \cdot e^{-\chi_m +\phi_m }= (\eta_m + \lambda_m )^{-1}.
\end{equation}

The proof of the theorem is divided by three steps.

\medskip

\textbf{Step 1: Construction of smooth extension}

We construct in this step a smooth section $\widetilde{f} \in C^{\infty} (X, K_X \otimes L)$ extending $f$
such that  $(D'' \widetilde{f} )(z) =0$ for every $z\in Z$ and 
\begin{equation}\label{l2controlnew}
\int_X \frac{|D^{''}\widetilde{f} |^2 _{\omega, h_0}}{|v|_{h_E} ^{2r} (\ln |v|_{h_E} )^2 } \cdot e^{- (1+ \sigma)\varphi} dV_{X, \omega} 
\leq C_1 \cdot\int_Z \frac{|f|^2 _{\omega, h}}{|\Lambda^r (d v)|^2} dV_{Z, \omega} 
\end{equation}
for some constant $\sigma > 0$.

\medskip

In fact, let $ ( U_i )$ be a small Stein cover of $X$ and let $(\chi_i )$ be a partition of unity subordinate to $(U_i)$. 
Thanks to \cite{GZ15b}, there exists a $\sigma > 0$, such that 
$$\int_{U_i \cap Z} |f|^2 _{\omega, h_0} e^{-(1+\sigma)\varphi} d V_{Z,\omega}\leq 2 \int_{U_i \cap Z} |f|^2 _{\omega, h} d V_{Z,\omega} .$$
Applying the local Ohsawa-Takegoshi extension theorem (cf. for example \cite[Thm 12.6]{Dem12}) 
to the weight $e^{-(1+\sigma)\varphi}$ on $U_i$, 
we obtain a holomorphic section $f_i$ on $U_i$
such that 
\begin{equation}\label{l2cont}
\int_{U_i} \frac{|f_i |^2 _{\omega, h_0}}{|v|^{2r} _{h_E} (\ln |v|_{h_E} )^2} \cdot e^{-(1+\sigma)\varphi} dV_{X,\omega}
\leq  C_2 \cdot \int_{U_i \cap Z} \frac{|f|^2 _{\omega, h} }{|\Lambda^r (d v)|^2} dV_{Z,\omega}. 
\end{equation}
Set $\widetilde{f} := \sum\limits_{i} \chi_i \cdot f_i $. Then
$$(D^{''} \widetilde{f})  |_{U_j} = D^{''} (\sum_i \chi_i \cdot (f_i-f_j)) = \sum_i (\overline{\partial} \chi_i)\cdot (f_i-f_j) \text{ on } U_j .$$
Combining this with \eqref{l2cont}, \eqref{l2controlnew} is proved. We have also $(D^{''} \widetilde{f}) (z) =0$ for every $z\in Z$.

\bigskip

\textbf{Step 2: $L^2$ estimate}

Set $g_m := D^{''}((1- b_m ' \circ \Psi) \cdot  \widetilde{f})$. We claim that

\smallskip

{\em Claim:} There exists a sequence $\{a_m \}_{m=1}^{+\infty} \subset \mathbb{N}$ tending to $+\infty$, $\gamma_m$ and $\beta_m$ such that
\begin{equation}\label{partialequation}
D^{''} \gamma_m + (\frac{m}{a_m})^{\frac{1}{2}} \beta_m = g_m ,\qquad \lim_{m\rightarrow +\infty} \frac{m}{a_m} =0 ,
\end{equation}
and
\begin{equation}\label{keyestimateoptial}
\varlimsup_{m\rightarrow +\infty} (\int_{X} \frac{|\gamma_m|^2 _{\omega, \widetilde{h}_{m, a_m}}}{ \eta_{m} + \lambda_{m}} dV_{X,\omega} +
C\int_{X} | \beta_m |^2 _{\omega, \widetilde{h}_{m, a_m}} dV_{X,\omega})  
\end{equation}
$$\leq 
e^{-\lim\limits_{t\rightarrow +\infty} u(t)} \cdot \int_Z \frac{|f|_{\omega, h}^2} {|\Lambda^r (d v)|^2} dV_{Z,\omega}$$
for some uniform constant $C >0$.
The proof of the claim combines the estimates in \cite{GZ15a} and \cite{Yi12}.
We postpone the proof of the claim in Lemma \ref{L2estimateoptimal} and first finish the proof of the theorem.

\medskip

We use \eqref{keyestimateoptial} to estimate $\int_{X} c_A (-b_m\circ\Psi)\cdot  |\gamma_m |^2_{\omega, h_{a_m}} d V_{X,\omega}$.
By \eqref{estimatebm} and \eqref{importantequality11}, we have
$$c_A (- b_m\circ\Psi)\cdot e^{ \Psi +\phi_m }= c_A (\chi_m)\cdot e^{ \Psi +\phi_m } \leq (\eta_m + \lambda_m)^{-1}.$$
Therefore
\begin{equation}\label{secondestimateoptial}
\int_{X} c_A (- b_m\circ\Psi) \cdot  |\gamma_m |^2_{\omega, h_{a_m}} d V_{X,\omega} 
\leq \int_{X} \frac{|\gamma_m |^2 _{\omega, \widetilde{h}_{m,a_m}}}{(\eta_{m} + \lambda_m)} d V_{X,\omega} .
\end{equation}
Combining this with \eqref{keyestimateoptial}, we get
\begin{equation}\label{addestimate}
\varlimsup_{m\rightarrow +\infty}\int_X  c_A (- b_m \circ\Psi)  |\gamma_m |^2_{\omega, h_{a_m}} d V_{X,\omega} 
\leq e^{-\lim\limits_{t\rightarrow +\infty} u(t)} \cdot \int_Z \frac{|f|_{\omega, h_0}^2 e^{-\varphi_k}} {|\Lambda^r (d v)|^2} dV_{Z,\omega}.
\end{equation}
Thanks to \eqref{addestimate}, by passing to a subsequence, we can assume that the sequence 
$$\{\gamma_m -(1-b_m '\circ \Psi)\widetilde{f} \}_{k=1}^{+\infty}$$ 
converges weakly (in the weak $L^2$-sense) to a section $F\in L^2 (X, K_X\otimes L)$.

\medskip

\textbf{Step 3: Final conclusion}

We first prove that $F$ is holomorphic and satisfies \eqref{normcontrol}.
In fact, thanks to \eqref{estimateweight1} and \eqref{keyestimateoptial}, we have 
\begin{equation}\label{addestimate2}
\int_{X}  |\beta_m|^2 _{\omega, h_{a_m}} e^{-\Psi} dV_{X,\omega}\leq C_3
\end{equation}
for some uniform constant $C_3$.
Since $\frac{m}{a_m}$ tends to $0$, \eqref{partialequation} and \eqref{addestimate2} imply that 
$D^{''} (\gamma_m -(1-b_m '\circ \Psi)\widetilde{f})$ tends to $0$. 
Therefore
$F\in H^0 (X, K_X \otimes L)$.

\smallskip

As $\{\varphi_k\}_{k=1}^{+\infty}$ is a decreasing sequence, for every $k_0\in \mathbb{N}$ fixed, we have
\begin{equation}\label{decreasingsequence}
\int_X  c_A (- b_m \circ\Psi)  |\gamma_m|^2 _{\omega, h_{k_0}}  dV_{\omega, X}
\leq \int_X  c_A (- b_m \circ\Psi)|\gamma_m|^2 _{\omega, h_k}  dV_{\omega, X}
\end{equation}
for every $k \geq k_0$. Combining this with \eqref{addestimate}, we get
$$\varliminf_{m\rightarrow +\infty}\int_X  c_A (- b_m \circ\Psi)  |\gamma_m |^2_{\omega, h_{k_0}} d V_{X,\omega} \leq 
e^{-\lim\limits_{t\rightarrow +\infty} u(t)} \cdot \int_Z \frac{|f|_{\omega, h_0}^2 e^{-\varphi_k}} {|\Lambda^r (d v)|^2} dV_{Z,\omega}$$
Applying Fatou's lemma to the above inequality, we obtain 
$$\int_X c_A (-\Psi )  |F|^2 _{\omega, h_{k_0}}  dV_{\omega, X}
\leq e^{-\lim\limits_{t\rightarrow +\infty} u(t)} \int_Z \frac{|f|_{\omega, h}^2} {|\Lambda^r (d v)|^2} dV_{Z,\omega},$$
and \eqref{normcontrol} is proved by letting $k_0\rightarrow +\infty$. 

\medskip

Let $\{U_i \}$ be a Stein cover of $X$.
To finish the proof of the theorem, it remains to prove that $F|_{U_i \cap Z}=f$ for every $i$. 
Since $\beta_m$ is $\overline{\partial}$-closed, on the Stein open set $U_i$, we can find a function 
$w_m$ such that $\overline{\partial} w_m = \beta_m$ and
$$\int_{U_i} | w_m |^2 _{\omega, h_{a_m}} e^{-\Psi}  dV_{X,\omega} \leq C_4 \int_{U_i} | \beta_m |^2 _{\omega, h_{a_m}} e^{-\Psi} dV_{X,\omega} \leq C_4 \cdot C_3 .$$
for some uniform constant $C_4$.
Then 
$$F_m := (1- b_m ' \circ \Psi) \cdot\widetilde{f} - \gamma_{m} -(\frac{m}{a_m})^{\frac{1}{2}} \cdot w_m$$
is a holomorphic function on $U_i$ and $F_m \rightharpoonup F$ on $U_i$.
As $F_m |_{U_i \cap Z} =f$ by construction, we know that $F |_{U_i \cap Z}=f$.
The theorem is proved.
\end{proof}

We complete here the proof of Theorem \ref{mainthmoptimal} by establishing the claim in Step 2.

\begin{lemme}\label{L2estimateoptimal}
The claim in Theorem \ref{mainthmoptimal} is true.
\end{lemme}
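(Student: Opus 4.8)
The plan is to establish Claim 1 by a regularization-plus-standard-$L^2$-estimate argument, following the template of \cite{GZ13} and \cite{Li12}. The obstruction to applying the Hörmander–Demailly machinery directly is that $b_m$ is only $C^1$, so $b_m''$ is merely $L^\infty$ and the weights $\chi_m = -b_m\circ\Psi$, $\eta_m = s\circ\chi_m$, $\phi_m = u\circ\chi_m$ are not smooth; hence the curvature term $B_{m,k}$ in \eqref{curvature} is to be interpreted distributionally. First I would fix $k$ and $m$, replace $b_m$ by a sequence of smooth functions $b_{m,\nu}$ with $b_{m,\nu}\to b_m$ in $C^1$ and $b_{m,\nu}''\to b_m''$ boundedly a.e. (keeping $b_{m,\nu}''\geq 0$ supported near $\{-m-1\leq t\leq -m\}$), run the argument with everything smooth, and pass to the limit $\nu\to\infty$ at the very end using dominated convergence; since the estimates will be uniform in $\nu$ this causes no trouble, so I will suppress $\nu$ in the sketch.

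The core is the twisted Bochner–Kodaira–Nakano inequality with the two auxiliary functions $\eta_m$ and $\lambda_m$, exactly as in Demailly's optimal-constant formalism. With the metric $\widetilde h_{m,k} = h_k\cdot e^{-\Psi-\phi_m}$ on $L$ and using \eqref{ODEequation2} (which gives $s' - s u' = 1$, i.e. the identity controlling the $d'd''\eta_m$ and $d'\eta_m\wedge d''\eta_m$ terms), the key pointwise curvature estimate is
\begin{equation*}
B_{m,k} = [\eta_m\, i\Theta_{\widetilde h_{m,k}}(L) - i d'd''\eta_m - \lambda_m^{-1}\, id'\eta_m\wedge d''\eta_m,\ \Lambda_\omega] \geq (b_m''\circ\Psi)\,[\,\partial\Psi\wedge\overline\partial\Psi,\ \Lambda_\omega\,] - \delta_k\,\eta_m\,\Id,
\end{equation*}
where the $-\delta_k\eta_m\Id$ comes from the approximation hypothesis \eqref{approximation}, and the $b_m''$ term and the nonnegativity of the remaining contributions use \eqref{addedlast} with the bound $0<a(t)\leq s(t)$ together with the sign of $i d'd''\Psi + [\text{curvature}]$ on $X\setminus Z$. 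Granting this, one applies the standard solvability result for $\overline\partial$ with the twist $\eta_m + \lambda_m$ (this is precisely the $L^2$ estimate proved in the appendix, following \cite[Remark 12.5]{Dem12}): since $g_{m,k} = D''\big((1-b_m'\circ\Psi)\widetilde f_k\big) = -(b_m''\circ\Psi)\,d'\Psi\wedge\widetilde f_k$ is supported in $\{-m-1\leq\Psi\leq -m\}$ and satisfies the Cauchy–Schwarz-type bound $|\langle g_{m,k},\xi\rangle|^2 \leq (b_m''\circ\Psi)\,\{\widetilde f_k,\widetilde f_k\}\cdot \langle B_{m,k}\xi,\xi\rangle$ against test forms $\xi$, one obtains $\gamma_{m,k}$ and $\beta_{m,k}$ with $\overline\partial\gamma_{m,k} + (\delta_k\eta_m)^{1/2}\beta_{m,k} = g_{m,k}$ and
\begin{equation*}
\int_X \{\gamma_{m,k},\gamma_{m,k}\}_{\widetilde h_{m,k}}(\eta_m+\lambda_m)^{-1} + \int_X \{\beta_{m,k},\beta_{m,k}\}_{\widetilde h_{m,k}} \leq \int_X (b_m''\circ\Psi)\,\{\widetilde f_k,\widetilde f_k\}_{\widetilde h_{m,k}},
\end{equation*}
which is \eqref{keyestimateoptial} before taking $\varlimsup_{m}$ on the left and $\lim_m$ on the right.

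Then I would take the limsup in $m$. The only subtlety is that property $(ii)$ of $\widetilde f_k$, namely $D''\widetilde f_k = 0$ on $Z$, is needed to ensure $g_{m,k}$ genuinely concentrates on the shell $\{-m-1\leq\Psi\leq -m\}$ with the right asymptotics, so that the right-hand side converges to $e^{-\lim_t u(t)}\int_{-m-1\leq\Psi\leq-m}\{\widetilde f_k,\widetilde f_k\}_{h_k}e^{-\Psi}$ and ultimately, via the definition of $|\Lambda^r(dv)|^2$ in Notations (ii), to the desired boundary integral over $Z$; the factor $e^{u(t)-t}$ appearing in \eqref{importantequality11} combined with $c_A(t)$ accounts for the constant. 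The main obstacle I anticipate is not any single estimate but the bookkeeping: making the regularization $b_{m,\nu}\to b_m$ interact cleanly with the limit $m\to\infty$ and with the fact that $\eta_m,\lambda_m,\phi_m$ depend on $m$, while keeping all constants independent of $k$ and $m$ as asserted. Concretely, one must check that the $L^2$ solvability constant in the appendix lemma does not degenerate as $\eta_m$ becomes unbounded near $Z$ (it does not, because the estimate is stated with the twist $(\eta_m+\lambda_m)^{-1}$ built in), and that the term $-\delta_k\eta_m\Id$ is absorbed — this is exactly why one must later choose $m_k$ after $\delta_k$, which is deferred to the end of Step 2. I expect the regularization step and this absorption to be where the real care is required; the rest is the classical Ohsawa–Takegoshi–Manivel computation.
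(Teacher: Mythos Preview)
There is a genuine gap in your argument: you compute $g_{m,k}$ incorrectly. You write
\[
g_{m,k} = D''\big((1-b_m'\circ\Psi)\widetilde f_k\big) = -(b_m''\circ\Psi)\,\overline\partial\Psi\wedge\widetilde f_k,
\]
but $\widetilde f_k$ is only $C^\infty$, not holomorphic (it was built by a partition of unity from local holomorphic extensions). The correct expansion is
\[
g_{m,k} = -(b_m''\circ\Psi)\,\overline\partial\Psi\wedge\widetilde f_k \;+\; (1-b_m'\circ\Psi)\,D''\widetilde f_k,
\]
and the second term is supported on the whole region $\{\Psi\leq -m\}$, not just the shell. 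Consequently your displayed inequality (the one you call ``\eqref{keyestimateoptial} before taking $\varlimsup_m$'') is false for fixed $m$: the right-hand side of the appendix $L^2$ estimate is $\int_X \langle (B_{m,k}+2\delta_k\eta_m)^{-1} g_{m,k}, g_{m,k}\rangle_{\widetilde h_{m,k}}$, and when you Cauchy--Schwarz this against the curvature lower bound you pick up, in addition to the shell term, a contribution
\[
\int_X \frac{1-b_m'\circ\Psi}{\delta_k\,\eta_m}\,\{D''\widetilde f_k,\,D''\widetilde f_k\}_{\widetilde h_{m,k}}
\]
coming from the $\delta_k\eta_m$ part of the operator. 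This term is not zero for fixed $m$.

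Killing this extra term is exactly where property (iii) of $\widetilde f_k$ (the weight $e^{-(1+\sigma_k)\varphi_k}$ with $\sigma_k>0$) and Li's estimate \cite[Lemma~1.19]{Li12} enter. Since $\eta_m\approx C|m|$ on $\{\Psi\leq -m\}$, the extra term is bounded by
\[
\frac{C}{\delta_k\,|m|}\int_{\{\Psi\leq -m\}}|D''\widetilde f_k|^2_{h_k}\,|v|^{-2r}\,dV_\omega,
\]
and Li's lemma, applied with the $\sigma_k$-slack in the weight, shows this tends to $0$ as $m\to\infty$ for each fixed $k$. Only after this step does one obtain \eqref{keyestimateoptial} with the $\varlimsup_m$; the inequality simply does not hold $m$-by-$m$. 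Your remark that property (ii) ($D''\widetilde f_k=0$ on $Z$) makes $g_{m,k}$ ``concentrate on the shell'' is a misreading: property (ii) is about pointwise vanishing on $Z$, but it does not make the second term vanish off $Z$. The rest of your outline (the regularization of $b_m$, the curvature lower bound, the appeal to the appendix) matches the paper's proof.
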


\begin{proof}

{\em Step 1: Approximation}

Since $b_{m}$ is not smooth, we construct first a smooth approximation of $b_m$.
Let $m , k$ be two fixed constants. Set 
$$v_{\ls} (t):= \int_{-\infty}^t \int_{-\infty}^{t_1} \frac{1}{1-2\ls} \textbf{1}_{\{-m-1+\ls < s < -m-\ls \}}*\rho_{\frac{\ls}{4}}ds dt_1$$
$$-\int_{-\infty}^0 \int_{-\infty}^{t_1} \frac{1}{1-2\ls}\textbf{1}_{\{-m-1+\ls < s < -m-\ls \}}*\rho_{\frac{\ls}{4}}ds dt_1$$
where $\rho_{\frac{\ls}{4}}$ is the kernel of convolution satisfying $\supp (\rho_{\frac{\ls}{4}})\subset (-\frac{\ls}{4}, \frac{\ls}{4})$.
It is easy to check that $v_\ls (t)$ is a smooth approximation of $b_m (t)$.
Set 
$$\eta_{\ls} := s(- v_\ls \circ \Psi), \qquad \phi_{\ls} := u (-v_{\ls} \circ \Psi), \qquad \widetilde{h}_\ls := h_k\cdot e^{-\Psi -\phi_\ls} $$
and 
$$B_\ls := [\eta_\ls i \Theta_{\widetilde{h}_{m,k}} -i \partial\overline{\partial} \eta_\ls - 
i (\lambda_\ls)^{-1} \partial \eta_\ls \wedge \overline{\partial}\eta_\ls, \Lambda_{\omega}] ,$$
where $\Lambda_{\omega}$ is the contraction with respect to $\omega$.
Then $\eta_{\ls} , \phi_\ls, B_\ls$ tend to $\eta_m, \phi_m, B_{m, k}$.

\medskip

{\em Step 2: $L^2$ estimate}

By using the estimates in \cite[page 1180]{GZ15a}, we know that 
\begin{equation}\label{curvature}
B_{m, k} :=[\eta_m (i\Theta_{\widetilde{h}_{m,k}} (L)) - i d' d'' \eta_m -\lambda_m ^{-1} id'\eta_m \wedge d''\eta_m, \Lambda_\omega]
\end{equation}
satisfies
$$B_{m, k} \geq ( b_m ''\circ \Psi )\cdot [\partial\Psi \wedge \overline{\partial}\Psi, \Lambda_\omega] - \frac{\eta_m}{k} \Id .$$
Combining this with \eqref{estimateweight2}, we have
$$B_{m, k} \geq ( b_m ''\circ \Psi )\cdot [\partial\Psi \wedge \overline{\partial}\Psi, \Lambda_\omega] - \frac{C\cdot m}{k} \Id .$$
for some uniform constant $C$.
Therefore, for every form $\alpha\in C_c ^{\infty} (X, \wedge^{n,1}T_X ^*\otimes L)$, 
we have \footnote{We refer to \cite[5.1]{GZ15a} for a detailed calculus.}
\begin{equation}\label{importantetimate}
 \| (\eta_\ls + \lambda_\ls)^{\frac{1}{2}} (D'')^* \alpha\|_{\widetilde{h}_k} ^2 + \| (\eta_\ls)^{\frac{1}{2}} D'' \alpha\|_{\widetilde{h}_k} ^2 \geq  \langle B_\ls \alpha, \alpha \rangle_{\widetilde{h}_k} .
\end{equation}
and
\begin{equation}\label{lessimportant}
\langle (B_\ls + \frac{C\cdot m}{k} \Id) \alpha, \alpha \rangle_{\widetilde{h}_\ls } \geq 
(v'' _{\ls} \circ \Psi ) \langle [\partial \Psi \wedge \overline{\partial} \Psi, \Lambda_{\omega}]\alpha, \alpha \rangle_{\widetilde{h}_\ls}
\end{equation}

\medskip

By applying a standard $L^2$-estimate (cf. appendix), we can find $\gamma_{\ls}$ and $\beta_{\ls}$
such that 
\begin{equation}\label{l2estimatestep2}
D^{''}\gamma_\ls + (\frac{m}{k})^{\frac{1}{2}} \beta_\ls = g_m 
\end{equation}
and
\begin{equation}\label{l2estimateStep2}
\int_X \frac{|\gamma_\ls|_{\omega, \widetilde{h}_\ls} ^2 }{\eta_\ls + \lambda_\ls} d V_{X,\omega} 
+\frac{1}{2C}\int_X |\beta_\ls|_{\omega, \widetilde{h}_\ls} ^2 d V_{X,\omega}
\end{equation}
$$\leq \int_X \langle (B_{\ls}+\frac{2C\cdot m}{k})^{-1} g_m , g_m  \rangle_{\omega, \widetilde{h}_{m,k}} dV_{X,\omega} . $$
By letting $\ls\rightarrow 0$, we can find $\gamma_{m , k}$ and $\beta_{m, k}$, 
such that 
\begin{equation}\label{l2estimatenewadd}
 D^{''}\gamma_{m,k} + (\frac{m}{k})^{\frac{1}{2}} \beta_{m,k} = g_m 
\end{equation}
and
\begin{equation}\label{l2estimatestep21}
\int_X \frac{|\gamma_{m,k}|_{\omega, \widetilde{h}_{m,k}} ^2}{ \eta_m + \lambda_m} dV_{X,\omega} 
+\frac{1}{2C}\int_X |\beta_{m,k}|_{\omega, \widetilde{h}_{m,k}} ^2 dV_{X,\omega}
\end{equation}
$$\leq \int_X \langle (B_{m , k}+\frac{2C\cdot m}{k})^{-1} g_m , g_m \rangle_{\omega, \widetilde{h}_{m,k}} dV_{X,\omega} .$$

\medskip

{\em Step 3: Final conclusion}

We first estimate the right hand side of \eqref{l2estimatestep21}.
By the construction of $g_m $ and \eqref{lessimportant}, we have 
\begin{equation}\label{Demestimate}
\int_X \langle (B_{m,k}+\frac{2C\cdot m}{k})^{-1} g_m , g_m \rangle_{\omega, \widetilde{h}_{m , k}} dV_{X,\omega}
\end{equation}
$$\leq 
 \int_X ( b_m ''\circ\Psi )\cdot  |\widetilde{f}|^2 _{\omega, \widetilde{h}_{m ,k}}  dV_\omega 
+ \frac{C\cdot k}{m}\int_X (1- b'_m \circ \Psi)|D''\widetilde{f}|^2 _{\omega, \widetilde{h}_{m, k}} dV_{X, \omega} .
$$
Since $(1- b'_m\circ\Psi) (z)= 0$ on $ \{ \Psi \geq -m \}$, we have 
$$\int_X (1- b'_m \circ \Psi)|D''\widetilde{f}|^2 _{\omega, \widetilde{h}_{m, k}} dV_{X, \omega} 
\leq \int_{ \{ \Psi \leq -m \}} |D''\widetilde{f} |_{\omega, \widetilde{h}_{m,k}} ^2 dV_{X,\omega} .$$
We use the following key estimate \cite[Lemma 3.1]{Yi12}: by Hölder inequality, we have
\begin{equation}\label{holder}
\int_{\{ \Psi \leq -m\}} \frac{|D''\widetilde{f} |_{\omega,h} ^2}{ |v|_{h_E} ^{2r}} dV_{X, \omega} 
\end{equation}
$$\leq (\int_{\{ \Psi \leq -m\}} \frac{|D''\widetilde{f} |_{\omega, h_0} ^2 e^{- (1+\sigma) \varphi}}{ |v|_{h_E} ^{2r} (\ln |v|_{h_E})^2 } dV_{X,\omega} )^{\frac{1}{1+\sigma}} \cdot 
(\int_{\{ \Psi \leq -m\}}\frac{|D''\widetilde{f} |_{\omega, h_0} ^2  (\ln |v|_{h_E} )^\frac{2}{\sigma}}{ |v|_{h_E} ^{2r}} dV_{X,\omega})^\frac{\sigma}{1+\sigma} .$$
As $D''\widetilde{f} =0$ on $Z$ by construction, we have 
$$\lim_{m\rightarrow +\infty} \int_{\{ \Psi \leq -m\}}\frac{|D''\widetilde{f} |_{\omega, h_0} ^2  (\ln |v|_{h_E} )^\frac{2}{\sigma}}{ |v|_{h_E} ^{2r}} dV_{X,\omega} =0.$$ 
Combining this with \eqref{holder} and \eqref{l2controlnew}, we obtain 
$$\lim_{m\rightarrow +\infty}\int_{\{ \Psi \leq -m\}} \frac{|D''\widetilde{f} |_{\omega,h} ^2}{ |v|_{h_E} ^{2r}} dV_{X,\omega} =0 .$$
As a consequence, we can find a sequence $a_m \rightarrow +\infty$ such that 
\begin{equation}\label{limitconp}
\lim_{m\rightarrow +\infty} \frac{m}{a_m} = 0 \qquad\text{and}\qquad
\lim\limits_{m\rightarrow +\infty }\frac{a_m}{m}\int_{ \{\Psi \leq -m\}} |D''\widetilde{f} |_{\omega,\widetilde{h}_{m,a_m}} ^2 dV_{X,\omega} =0 .
\end{equation}
Applying \eqref{limitconp} to \eqref{Demestimate}, we obtain
\begin{equation}\label{lemmestim}
\varlimsup_{m\rightarrow +\infty} \int_X \langle (B_{m,a_m}+\frac{2 C\cdot m}{ a_m})^{-1} g_m , g_m \rangle_{\omega, \widetilde{h}_{m , a_m}}
dV_{X,\omega}
\end{equation}
$$\leq \varlimsup_{m\rightarrow +\infty} \int_X ( b_m ''\circ\Psi )\cdot  |\widetilde{f}|^2 _{\omega, \widetilde{h}_{m ,k}}  dV_{X,\omega} \leq 
e^{-\lim\limits_{t\rightarrow +\infty} u(t)} \cdot \int_Z \frac{|f|_{\omega, h}^2} {|\Lambda^r (d v)|^2} dV_{Z,\omega}.$$

\medskip

Finally, we take $\gamma_{m} = \gamma_{m, a_m}$ and $\beta_m =\beta_{m, a_m}$ in \eqref{l2estimatenewadd}. 
Then \eqref{l2estimatestep21} and \eqref{lemmestim} imply
$$\varlimsup_{m\rightarrow +\infty} 
(\int_X |\gamma_m |_{\omega,\widetilde{h}_{m, a_m}} ^2 (\eta_m + \lambda_m)^{-1} dV_{X,\omega} 
+\frac{1}{2C}\int_X |\beta_m |_{\omega, \widetilde{h}_{m,a_m}} ^2 dV_{X,\omega})$$
$$\leq 
e^{-\lim\limits_{t\rightarrow +\infty} u(t)} \cdot \int_Z \frac{|f|_{\omega, h}^2} {|\Lambda^r (d v)|^2} dV_{Z,\omega}.$$
The lemma is proved.
\end{proof}

\section{Applications}

\subsection{Some direct applications}

As pointed out in Remark \ref{specialcaseremark} in the introduction, Theorem \ref{mainthmoptimal} implies that

\begin{cor}\label{optimalestimatemaintext}
Let $(X,\omega)$ be a K\"ahler manifold with a proper map $\pr: X\rightarrow \Delta$ to a ball
$\Delta\subset \mathbb{C}^1$ centered at $0$ of radius $R$. 
Let $(L, h)$ be a holomorphic line bundle over $X$ equipped with 
a hermitian metric (maybe singular) $h$ such that $i\Theta_h (L)\geq 0$.
Suppose that $X_0 := \pr^{-1}(0)$ is a smooth subvariety of codimension $1$. 
Let $f\in H^0 (X_0 , K_{X_0} + L)$. 
Then there exists a section $F\in H^0 (X, K_{X} + L)$
such that
\begin{equation}\label{optimalestimate}
\frac{1}{\pi R^2}\int_{X} |F|_{\omega,h} ^2 d V_{X,\omega}\leq  \int_{X_0} |f|_{\omega,h} ^2 dV_{X_0, \omega}
\end{equation}
and $F|_{X_0}= \pr^* (dt)\wedge f$, where $t$ is the standard coordinate of $\mathbb{C}^1$.
\end{cor}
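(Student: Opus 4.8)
The plan is to deduce Corollary \ref{optimalestimatemaintext} from Theorem \ref{mainthmoptimal} through the specialization announced in Remark \ref{specialcaseremark}. First I dispose of the degenerate cases: if $f\equiv 0$ take $F=0$, and if $\int_{X_0}\{f,f\}_h=+\infty$ there is nothing to prove, so assume $f\not\equiv 0$ and $\int_{X_0}\{f,f\}_h<+\infty$. The latter forces $\varphi|_{X_0}\not\equiv-\infty$ and says precisely that $\pr^*(dt)\wedge f$, regarded as a holomorphic section of $(K_X\otimes L)|_{X_0}$ via the adjunction isomorphism $K_X|_{X_0}\cong K_{X_0}\otimes N^*_{X_0/X}$ (the conormal bundle of $X_0$ being generated by $\pr^*(dt)$), lies in $\sI(\varphi|_{X_0})$. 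I then take $E:=\pr^*\mathcal{O}_\Delta$ with its trivial metric $h_E$ and $v:=\pr^*t\in H^0(X,E)$, so that $Z:=\{v=0\}=X_0$ is smooth of codimension $r=1$, the inequality $|v|_{h_E}^2=|\pr^*t|^2<R^2$ permits $A:=\ln R^2$, and $\Psi=\ln|v|_{h_E}^2=\pr^*\log|t|^2$. Finally I choose the weight $c_A(t)\equiv 1$: then $c_A(t)e^{-t}=e^{-t}$ is decreasing, so \eqref{addcondition} holds automatically by Remark \ref{defnweight}, $\int_{-A}^{+\infty}c_A(t)e^{-t}\,dt=e^A=R^2<+\infty$, and the explicit formula for $u$ gives $e^{-\lim_{t\to+\infty}u(t)}=e^A(1+\delta^{-1})=R^2(1+\delta^{-1})$.

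Next I check the remaining hypotheses of Theorem \ref{mainthmoptimal}. Since $\pr$ is proper onto the ball, $|\pr^*t|^2$ is a smooth psh exhaustion of $X$, so $X$ is weakly pseudoconvex K\"ahler, as required. For the parameter function I take $a(t):=s(t)$, which is admissible because $s(t)>0$ for every $t$ (the numerator in its defining formula contains the strictly positive summand $c_A(-A)e^A\delta^{-2}$); and, taking $\gamma:=0$ in the curvature hypothesis (legitimate since $i\Theta_h(L)\geq 0$), condition \eqref{addedlast} reduces to $(1+s(-\Psi))\,id'd''\Psi\geq 0$, which holds because $\Psi=\pr^*\log|t|^2$ is psh, being the pullback of a psh function by a holomorphic map, so that $id'd''\Psi\geq 0$ as a current. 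It remains to secure the approximation hypothesis \eqref{approximation}. If Demailly's regularization of $\varphi$ with controlled loss of positivity is granted on the weakly pseudoconvex K\"ahler manifold $X$ (here with $\gamma=0$), Theorem \ref{mainthmoptimal} applies as it stands. Otherwise one works on the relatively compact pieces $X_{R'}:=\pr^{-1}(\Delta_{R'})$, $R'<R$, whose closures are compact: there the regularization exists, Theorem \ref{mainthmoptimal} applies to the proper map $\pr\colon X_{R'}\to\Delta_{R'}$, and letting $R'\to R$ one extracts a weak $L^2_{\mathrm{loc}}$ limit $F$ of the sections $F_{R'}$ so obtained; this $F$ is holomorphic, satisfies $F|_{X_0}=\pr^*(dt)\wedge f$ (holomorphic sections of locally bounded $L^2$ norm converge locally uniformly), and inherits the global $L^2$ bound by Fatou.

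In either case Theorem \ref{mainthmoptimal} produces $F\in H^0(X,K_X\otimes L)$ with $F|_{X_0}=\pr^*(dt)\wedge f$ and, since $c_A(-\Psi)\equiv 1$,
$$\int_X\{F,F\}_h\ \leq\ R^2(1+\delta^{-1})\int_{X_0}\frac{|\pr^*(dt)\wedge f|^2\,e^{-\varphi}}{|\Lambda^1(dv)|^2}\,dV_{X_0,\omega}.$$
The last step is to unwind the weight $|\Lambda^1(dv)|^2$ and the constant. By its defining limit formula, for the trivial metric on $E$ and $\Psi=\log|t|^2\circ\pr$ the slice $\{-m-1\leq\Psi\leq -m\}$ is the $\pr$-preimage of the annulus $e^{-(m+1)/2}\leq|t|\leq e^{-m/2}$, over which $\int|t|^{-2}\,dA(t)=2\pi\int_{e^{-(m+1)/2}}^{e^{-m/2}}r^{-1}\,dr=\pi$ independently of $m$; combined with the adjunction identity relating $|\pr^*(dt)\wedge f|^2$ to $|f|^2$, which absorbs the $\omega$-dependent factor carried by $|\Lambda^1(dv)|^2$, this gives
$$\int_{X_0}\frac{|\pr^*(dt)\wedge f|^2\,e^{-\varphi}}{|\Lambda^1(dv)|^2}\,dV_{X_0,\omega}\ =\ \pi\int_{X_0}\{f,f\}_h.$$
Letting $\delta\to+\infty$ then yields $\int_X\{F,F\}_h\leq\pi R^2\int_{X_0}\{f,f\}_h$, i.e. exactly the optimal estimate \eqref{optimalestimate}. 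There is no genuine difficulty here: the one point deserving a separate word is the existence of the approximating sequence \eqref{approximation} in the non-compact setting, which the properness of $\pr$ reduces to relatively compact pieces, while everything else amounts to keeping track of how the factor $e^{-\lim u(t)}\to R^2$ and the annular integral (contributing $\pi$) assemble into the constant $\tfrac{1}{\pi R^2}$.
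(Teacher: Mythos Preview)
Your proposal is correct and follows exactly the specialization indicated in Remark \ref{specialcaseremark}, which is all the paper offers as a proof of this corollary. You supply considerably more detail than the paper does---verifying \eqref{addcondition} and \eqref{addedlast}, computing $e^{-\lim u(t)}=R^2(1+\delta^{-1})$ and the annular factor $\pi$, and handling the approximation hypothesis \eqref{approximation} via restriction to $X_{R'}$---but the route is the same; the only point to make explicit is that the extension $F$ a priori depends on $\delta$, so the passage $\delta\to+\infty$ requires one more weak-limit extraction of the same kind you already perform for $R'\to R$.
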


By the same arguments as in \cite[A.1]{BP10}, Corollary \ref{optimalestimatemaintext} implies the following result:

\begin{cor}\label{extensionoptimalm}
Let $(X,\omega)$ be a K\"ahler manifold and $\pr: X\rightarrow \Delta$ be a proper map to the ball 
$\Delta\subset \mathbb{C}^1$ centered at $0$ of radius $R$. 
Let $L$ be a holomorphic line bundle over $X$ equipped with 
a hermitian metric (maybe singular) $h= h_0 \cdot e^{-\varphi}$ such that $i\Theta_{h} (L)\geq 0$
in the sense of current, where $h_0$ is a smooth metric and $\varphi$ is a quasi-psh function on $X$.
Suppose that $X_0 := \pr^{-1}(0)$ is smooth of codimension $1$. 
Let $f\in H^0 (X_0 , m K_{X_0} \otimes L)$. We suppose that 
$$\int_{X_0}|f|_{\omega, h} ^{\frac{2}{m}} d V_{X_0,\omega}< + \infty$$
and there exists a $F\in H^0 (X, m K_{X} \otimes L)$
such that
$$F |_{X_0}= f \otimes \pr^* (dt ^{\otimes m}) \qquad\text{and}\qquad \int_{X} |F|_{\omega, h} ^{\frac{2}{m}} d V_{X,\omega}< + \infty ,$$
where $t$ is the standard coordinate of $\mathbb{C}^1$.
Then there exists a $\widetilde{F}\in H^0 (X, m K_X \otimes L)$
such that
\begin{equation}\label{optimalestimate2}
\widetilde{F} |_{X_0}=  f \otimes \pr^* (dt ^{\otimes m})  \qquad\text{and} \qquad
\frac{1}{\pi R^2}\int_{X}  |\widetilde{F}|_{\omega, h} ^{\frac{2}{m}} d V_{X,\omega} 
\leq  \int_{X_0} |f|_{\omega, h} ^{\frac{2}{m}} d V_{X_0, \omega} .
\end{equation}
\end{cor}

\begin{proof}
The proof given here follows closely \cite[A.1]{BP10}.
Set 
$$C_1 := \int_{X_0} |f|_{\omega, h} ^{\frac{2}{m}} dV_{X_0, \omega}
\qquad\text{and}\qquad C_2 :=\frac{1}{\pi R^2}\int_{X} |F|_{\omega, h} ^{\frac{2}{m}} d V_{X, \omega}.$$
If $C_2 \leq C_1$, then $F$ satisfies \eqref{optimalestimate2} and the corollary is proved.
If $C_1 < C_2$,
since $F$ is holomorphic, we can apply Corollary \ref{optimalestimatemaintext} with weight 
$$\varphi_1 := \frac{m-1}{m}\ln |F|_{\omega, h_0} ^2 + \frac{1}{m}\varphi$$
on the line bundle $(m-1)K_X \otimes L$, and obtain a new extension $F_1$ of $f$ satisfying
\begin{equation}\label{newextension}
\frac{1}{\pi R^2}\int_X \frac{|F_1|_{\omega, h} ^2}{|F|_{\omega, h} ^\frac{2(m-1)}{m}} d V_{X, \omega}
\leq \int_{X_0} \frac{|f|_{\omega,h} ^2}{|f|_{\omega, h} ^\frac{2(m-1)}{m}} d V_{X_0, \omega}
=\int_{X_0} |f|_{\omega, h} ^{\frac{2}{m}} dV_{X_0 ,\omega} .
\end{equation}
By H\"{o}lder's inequality, we have
$$\frac{1}{\pi R^2}\int_X |F_1|_{\omega, h} ^{\frac{2}{m}}  d V_{X, \omega}
\leq (\frac{1}{\pi R^2} \int_X |F|_{\omega, h} ^{\frac{2}{m}} d V_{X, \omega})^{1-\frac{1}{m}}\cdot 
(\frac{1}{\pi R^2}\int_X \frac{|F_1|_{\omega, h} ^2}{|F|_{\omega, h} ^\frac{2(m-1)}{m}} d V_{X, \omega})^{\frac{1}{m}} .$$
Combining with \eqref{newextension}, we have
\begin{equation}\label{recurrence}
\frac{1}{\pi R^2}\int_X |F_1|_{\omega, h} ^{\frac{2}{m}} d V_{X, \omega}\leq (C_2) ^{1- \frac{1}{m}} (C_1)^{\frac{1}{m}} .
\end{equation}
We can repeat the same argument with $F$ replaced by $F_1$, etc.
We obtain thus a sequence $\{F_i\}_{i=1} ^{+\infty}\subset H^0 (X, m K_X \otimes L)$,
and 
\begin{equation}\label{repeat}
\frac{1}{\pi R^2}\int_X |F_i|_{\omega, h} ^{\frac{2}{m}} d V_{X, \omega}\leq 
(\frac{1}{\pi R^2}\int_X |F_{i-1}|_{\omega, h} ^{\frac{2}{m}} d V_{X, \omega})\cdot (C_1)^{\frac{1}{m}}
\end{equation}

\medskip

If there exists an $i\in \mathbb{N}$ such that $F_i$ satisfies \eqref{optimalestimate2}, then Corollary \eqref{extensionoptimalm} is proved.
If not, thanks to \eqref{repeat}, we have
\begin{equation}\label{recurrence2}
\frac{1}{\pi R^2}\int_X |F_i|_{\omega, h} ^{\frac{2}{m}} d V_{X, \omega}\searrow C_1 .
\end{equation}
By passing to a subsequence, $F_i$ tends to a section $\widetilde{F} \in H^0 (X, m K_X \otimes L)$, 
and $\widetilde{F} \mid_{Z} =f$.
By Fatou lemma, \eqref{recurrence2} implies that 
$$\frac{1}{\pi R^2}\int_{X} |\widetilde{F}|_{\omega, h} ^{\frac{2}{m}}  d V_{X, \omega} \leq C_1 .$$
Corollary \ref{extensionoptimalm} is proved.
\end{proof}

\subsection{Positivity of $m$-relative Bergman Kernel metric}

We first recall the definition of $m$-relative Bergman Kernel metric (cf. \cite[A.2]{BP10}, \cite{BP08}, \cite{Kaw85}, \cite{Tsu07}).
Let $p: X\rightarrow Y$ be a surjective map between two smooth manifolds and 
let $(L, h_L )$ be a line bundle on $X$ equipped with a hermitian metric $h_L$.
Let $x\in X$ be a point on a smooth fiber of $p$.
We first define a hermitian metric $h$ on $-(m K_{X/Y} +L)_x$ by 
\begin{equation}\label{dualbergman}
\|\xi\|_h ^2 :=\sup \frac{|\xi (\tau(x) )|^2}{(\int_{X_{p(x)}} |\tau|_{\omega,h_L} ^{\frac{2}{m}} d V_{X_{p(x)}, \omega})^m} ,
\end{equation}
where the "sup" is taken over all sections $\tau\in H^0 (X_{p(x)}, m K_{X/Y}+ L)$.
The $m$-relative Bergman Kernel metric $h^{(m)}_{X/Y}$ on $m K_{X/Y} +L$ is defined to be the dual of $h$. 

Although the construction of the metric $h^{(m)}_{X/Y}$ is fiberwise and only defined on the smooth fibers, 
by using the positivity of direct image arguments, \cite[Thm 0.1]{BP10} proved that:

\begin{thm}\label{thmbergmankernelpositivity}\cite[Thm 0.1]{BP10}
Let $p: X\rightarrow Y$ be a fibration between two projective manifolds, and let $\omega$ be a K\"{a}hler metric on $X$.
Let $L\rightarrow X$ be a line bundle endowed with a metric (maybe singular) $h$ such that $i\Theta_{h} (L) \geq 0$.
Suppose that there exists a generic point $z\in Y$ and a section $u\in H^0 (X_z,  m K_{X/Y} + L)$ such that 
$$\int_{X_z} |u|_{\omega, h} ^{\frac{2}{m}} d V_{X_z , \omega} < +\infty .$$
Then the line bundle $m K_{X/Y} + L$ admits a metric with positive curvature current. 
Moreover, this metric equals to the fiberwise $m$-Bergman kernel metric (with respect to $h$ ) on the generic fibers of $p$.
\end{thm}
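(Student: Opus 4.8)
The plan is to follow the strategy of \cite{BP10}, but to replace the appeal to positivity of direct images by the optimal $L^{2/m}$ extension of Corollary \ref{extensionoptimalm} (equivalently Corollary \ref{optimalestimatemaintext}). Since plurisubharmonicity of a function on $Y$ can be tested on the restrictions to generic holomorphic disks, I would first reduce to the case $\dim Y = 1$, say $Y=\Delta\subset\bC$ a disk. Let $X^{\circ}\subset X$ be the Zariski open set lying over the locus of smooth fibres and on which the fibrewise $m$-Bergman kernel is well defined and non-vanishing; on $X^{\circ}$ the metric $h^{(m)}_{X/Y}$ is a genuine smooth (in fact real-analytic in suitable coordinates) metric. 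There are then two things to establish: (a) the local weight $\varphi^{(m)}$ of $h^{(m)}_{X/Y}$ is psh on $X^{\circ}$; and (b) $\varphi^{(m)}$ is locally bounded above near $X\setminus X^{\circ}$ and therefore extends to a psh weight on all of $X$, whose curvature current is then $\geq 0$. By construction the extended metric restricts to $h^{(m)}_{X/Y}$ over $X^{\circ}$, which is exactly what is asserted.

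For (a), fix $x_{0}\in X^{\circ}$ over $0\in\Delta$, choose a local holomorphic section $t\mapsto x(t)$ of $p$ through $x_{0}$, a local holomorphic frame $e$ of $mK_{X}+L$, and set $\Phi(t):=-\log\|e(x(t))\|^{2}_{h^{(m)}_{X/Y}}$; unwinding the definition, $\Phi(t)=\sup_{\tau}\big(\log|\langle\tau(x(t)),e^{*}\rangle|^{2}-m\log\int_{X_{t}}\{\tau,\tau\}^{\frac1m}e^{-\varphi_{L}/m}\big)$ over $\tau\in H^{0}(X_{t},mK_{X/Y}+L)$. Given $\varepsilon>0$, pick a near-extremal $\tau_{0}$ on $X_{0}$, normalised so that $\int_{X_{0}}\{\tau_{0},\tau_{0}\}^{\frac1m}e^{-\varphi_{L}/m}=1$. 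Applying Corollary \ref{extensionoptimalm} over a small disk $D=D(0,\rho)\subset\Delta$ yields an extension $\widetilde F$ of $\tau_{0}$ on $p^{-1}(D)$ with $\frac{1}{\pi\rho^{2}}\int_{p^{-1}(D)}\{\widetilde F,\widetilde F\}^{\frac1m}e^{-\varphi_{L}/m}\leq 1$. For a.e.\ $t\in D$ the restriction $\widetilde F_{t}$ lies in $H^{0}(X_{t},mK_{X/Y}+L)$ with finite $L^{2/m}$ norm, hence is a competitor in the supremum, so
\[
\Phi(t)\ \geq\ \log|\langle\widetilde F_{t}(x(t)),e^{*}\rangle|^{2}\ -\ m\log\int_{X_{t}}\{\widetilde F_{t},\widetilde F_{t}\}^{\frac1m}e^{-\varphi_{L}/m}.
\]
Now average over $D$: the map $t\mapsto\langle\widetilde F_{t}(x(t)),e^{*}\rangle$ is holomorphic, so the mean of the first term dominates its value at $0$, namely $\log|\langle\tau_{0}(x_{0}),e^{*}\rangle|^{2}$; for the second term, Jensen's inequality (concavity of $\log$) followed by Fubini and the optimal estimate gives $\frac{1}{\pi\rho^{2}}\int_{D}\log\!\big(\int_{X_{t}}\{\widetilde F_{t},\widetilde F_{t}\}^{\frac1m}e^{-\varphi_{L}/m}\big)\,d\lambda(t)\leq\log 1=0$. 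Hence $\frac{1}{\pi\rho^{2}}\int_{D}\Phi\,d\lambda\geq\Phi(0)-\varepsilon$; letting $\varepsilon\to 0$ and varying $D$ and the section $x(\cdot)$ shows $\Phi$ satisfies the sub-mean-value inequality in every direction. Combined with local boundedness above of $\Phi$ on $X^{\circ}$ (the extremal problem on a fixed smooth fibre has a genuine finite value, which also yields upper semicontinuity of $\Phi$), this gives that $\varphi^{(m)}$ is psh on $X^{\circ}$.

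For (b), $X\setminus X^{\circ}$ is contained in the union of the non-smooth-fibre locus and the zero set of the $m$-Bergman kernel, both analytic, and the hypothesis that some generic fibre carries a section $u$ with finite $L^{2/m}$ norm guarantees $\varphi^{(m)}\not\equiv-\infty$; together with local boundedness from above near $X\setminus X^{\circ}$, the classical extension theorem for psh functions across analytic subsets produces a psh weight on $X$, i.e.\ a metric on $mK_{X/Y}+L$ with positive curvature current, restricting to $h^{(m)}_{X/Y}$ over $X^{\circ}$. The main obstacle I expect is the very first step of (a): Corollary \ref{extensionoptimalm} presupposes the existence of \emph{some} finite-$L^{2/m}$-norm extension of $\tau_{0}$, which is not a priori available. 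For $m=1$ this is automatic, since Corollary \ref{optimalestimatemaintext} always produces an extension; for $m>1$ the natural way to break the circularity is induction on $m$, using the induction hypothesis to endow $(m-1)K_{X/Y}+L$ with a semipositively curved metric (built from $h^{(m-1)}_{X/Y}$ and $h_{L}$, and matched to the $m$-Bergman construction through the equality case of Hölder's inequality) and then invoking the $m=1$ optimal extension with that metric. Setting up this bootstrap rigorously, while keeping the estimates uniform as one approaches the degeneration locus, is the technical heart of the argument; the remaining sub-mean-value computation is routine and proceeds exactly as in \cite[Cor 3.7]{GZ13} and \cite[A.2]{BP10}.
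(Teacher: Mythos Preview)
Your overall architecture—sub-mean-value inequality via the optimal extension on one-dimensional disks, followed by Riemann-type extension across the bad locus—matches the paper's (and \cite[Cor 3.7]{GZ13}'s) exactly, and you have correctly isolated the one genuine obstacle: Corollary \ref{extensionoptimalm} presupposes the existence of \emph{some} finite-$L^{2/m}$ extension of $\tau_0$ before it will produce the optimal one.

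Where you diverge from the paper is in how you propose to supply that preliminary extension. Your suggestion is an induction on $m$, equipping $(m-1)K_{X/Y}+L$ with its $(m-1)$-Bergman metric and then invoking the $m=1$ extension; you acknowledge that setting up this bootstrap rigorously is ``the technical heart'' and leave it unproved. The paper does not do this. Instead it proves Lemma \ref{coherent}: using the strong openness theorem \cite{GZ13B}, the sheaf $\mathcal{I}_m(\varphi_L)$ is coherent, hence so is the direct image $p_*\big((K_{X/Y})^m\otimes L\otimes\mathcal{I}_m(\varphi_L)\big)$. By generic base change there is then a proper analytic subset $Z\subset Y$ such that for every $t\in Y\setminus Z$ the fibre $X_t$ is smooth and $\dim H^0(X_t,(K_{X/Y})^m\otimes L\otimes\mathcal{I}_m(\varphi_L)|_{X_t})$ equals the rank; in particular every $\tau_0$ on $X_t$ with finite $L^{2/m}$ norm extends holomorphically over a Stein neighbourhood of $t$. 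This directly provides the hypothesis of Corollary \ref{extensionoptimalm}, with no induction on $m$ and no appeal to ample line bundles. The remaining steps—your sub-mean-value computation for (a), and the upper bound near $p^{-1}(Z)$ for (b) via the mean-value inequality applied to an optimal extension—are then carried out in the paper essentially as you sketch them.

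Two minor remarks. First, there is no need to reduce to $\dim Y=1$ at the outset; the paper works on $Y$ of arbitrary dimension and simply tests plurisubharmonicity along embedded disks $\Delta_r\subset Y$. Second, your part (b) asserts local boundedness above near $X\setminus X^\circ$ without argument; the paper proves this explicitly, again by producing an optimal extension from the fibre through a given point and applying the mean-value inequality to bound the extremal quotient.
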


An alternative proof of Theorem \ref{thmbergmankernelpositivity} 
is given by using the optimal extension proved in \cite[Thm 2.1, Cor 3.7]{GZ15a}.
We should remark that, if $\varphi_L$ has arbitrary singularity, the proof of in \cite[Thm 0.1]{BP10} uses the existence of ample line on $X$.
Therefore the assumption that $p$ is a projective map is essential in the proof of Theorem \ref{thmbergmankernelpositivity} 
in \cite[Thm 0.1]{BP10}.
However, as pointed out by M. P\u{a}un, 
since the optimal extension proved in Corollary \ref{extensionoptimalm} is without projectivity assumption, 
we can use Corollary \ref{extensionoptimalm} to generalize Theorem \ref{thmbergmankernelpositivity} 
to arbitrary compact K\"ahler fibrations, 
by using the same arguments in \cite[Cor 3.7]{GZ15a}.
For the reader's convenience, we give the proof of this generalization in this subsection.

To begin with, we first prove the following lemma, which uses the recent important result \cite{GZ15b}.

\begin{lemme}\label{coherent}
Let $\varphi$ be a psh function on a Stein open set $U$.
Set: 
$$\mathcal{I}_m (\varphi)_x := \{f\in \mathcal{O}_x | \int_{U_x} |f|^{\frac{2}{m}} e^{-\frac{\varphi}{m}} < +\infty \} .$$
Then $\mathcal{I}_m (\varphi)$ is a coherent sheaf.
\end{lemme}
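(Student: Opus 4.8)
~\textbf{Proof proposal.}

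The plan is to reduce to the case $m=1$, where the statement is precisely the coherence of the multiplier ideal sheaf $\mathcal{I}(\varphi)$, together with the Guan--Zhou result that $\mathcal{I}_+(\psi)=\mathcal{I}(\psi)$. The key observation is that for a psh function $\varphi$ on a Stein open set $U$, a germ $f\in\mathcal{O}_x$ satisfies $\int_{U_x}|f|^{2/m}e^{-\varphi/m}<+\infty$ if and only if $|f|^2$ is locally integrable against $e^{-\psi}$ where $\psi$ is the psh weight $\psi:=\varphi+(m-1)\ln|f|^2$ — but this reformulation depends on $f$ itself, so it cannot be used directly to identify $\mathcal{I}_m(\varphi)$ with a single multiplier ideal. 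Instead, the natural approach is Noetherianity: since $\mathcal{O}_U$ is a coherent (Noetherian) sheaf of rings, it suffices to show that $\mathcal{I}_m(\varphi)$ is \emph{locally finitely generated} and that, once generators are chosen on a neighborhood, the subsheaf they generate is the whole of $\mathcal{I}_m(\varphi)$ near each point. So the first step is to fix $x$ and produce germs $g_1,\dots,g_N\in\mathcal{I}_m(\varphi)_x$ such that every germ of $\mathcal{I}_m(\varphi)$ at points $y$ close to $x$ lies in $\sum_j\mathcal{O}_y g_j$.

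First I would handle the condition $\int|f|^{2/m}e^{-\varphi/m}<+\infty$ via a Hölder-type reduction to an $L^2$ condition. Given any finite set of germs $g_1,\dots,g_N\in\mathcal{I}_m(\varphi)_x$, set $G:=|g_1|^2+\cdots+|g_N|^2$ and consider the psh weight $\psi:=\varphi+(m-1)\ln G$. One checks by the convexity (Hölder) inequality that if $f=\sum_j h_j g_j$ with $h_j\in\mathcal{O}$, then $\int|f|^{2/m}e^{-\varphi/m}$ is controlled by $\left(\int\sum_j|h_j|^2\,G^{1-1/m}\,e^{-\varphi/m}\right)^{1/m}\cdot(\cdots)$, and conversely integrability of $f$ against $G^{-(m-1)/m}e^{-\varphi/m}$, i.e.\ of $|f|^2$ against $e^{-\psi}$, implies $\int|f|^{2/m}e^{-\varphi/m}<+\infty$. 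The upshot should be: for a fixed choice of generators spanning $\mathcal{I}_m(\varphi)_x$, one has $\mathcal{I}_m(\varphi)\cap\bigl(\sum_j\mathcal{O}g_j\bigr)=\mathcal{I}(\psi)\cap\bigl(\sum_j\mathcal{O}g_j\bigr)$ near $x$, up to passing between $\varphi$ and $(1+\varepsilon)\varphi$ — and this is exactly where the Guan--Zhou strong openness theorem $\mathcal{I}_+=\mathcal{I}$ enters, to absorb the $\varepsilon$. Since $\mathcal{I}(\psi)$ is coherent (Nadel), the intersection on the right is coherent, hence finitely generated, hence so is $\mathcal{I}_m(\varphi)_x$ provided the chosen $g_j$ already generate it — which is a fixed-point/Noetherian argument: among all subsheaves of the form $\sum\mathcal{O}g_j$ with $g_j\in\mathcal{I}_m(\varphi)$, pick one maximal at the stalk at $x$ (possible because $\mathcal{O}_x$ is Noetherian), and the $L^2$-comparison above forces it to equal $\mathcal{I}_m(\varphi)$ in a neighborhood.

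Concretely I would carry out the steps in this order: (1) show $\mathcal{I}_m(\varphi)$ is a sheaf of $\mathcal{O}_U$-modules (trivial) and that each stalk is an ideal closed under the relevant operations; (2) prove the two-sided Hölder inequality relating $\int|f|^{2/m}e^{-\varphi/m}$ to an $L^2$-norm against $e^{-\psi}$ with $\psi=\varphi+(m-1)\ln G$ for $G$ a sum of squared moduli of finitely many elements of $\mathcal{I}_m(\varphi)_x$; (3) use Noetherianity of $\mathcal{O}_x$ to choose $G$ so that $\sum\mathcal{O}_x g_j$ is maximal among such finitely generated submodules; (4) invoke Nadel coherence of $\mathcal{I}(\psi)$ and the strong openness theorem of \cite{GZ13B} to deduce that this maximal submodule coincides with $\mathcal{I}_m(\varphi)$ on a neighborhood of $x$; (5) conclude coherence. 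The main obstacle I expect is step (2)–(4): the weight $\psi$ depends on the chosen generators, so one has to argue that enlarging the generating set does not change the conclusion, and that the $\varepsilon$-loss in passing from $\varphi$ to $(1+\varepsilon)\varphi$ (needed to apply the $L^2$ extension/division with strictly positive margin) is recovered exactly by $\mathcal{I}_+(\psi)=\mathcal{I}(\psi)$. Getting the Hölder exponents and the direction of the inequalities right, and checking that the maximality argument genuinely closes, is where the real work lies; everything else is formal.
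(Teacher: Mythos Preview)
Your plan has a genuine gap at the step you yourself flag, and it is not just a matter of getting the H\"older exponents right. With $g_1,\dots,g_N\in\mathcal I_m(\varphi)_x$ fixed, $G=\sum|g_j|^2$, and $\psi=\tfrac{1}{m}\varphi+\tfrac{m-1}{m}\ln G$, the H\"older inequality indeed gives $\mathcal I(\psi)\subset\mathcal I_m(\varphi)$ (and each $g_j\in\mathcal I(\psi)$, so $(g_j)\subset\mathcal I(\psi)$). But to close the maximality argument at a nearby point $y$ you need the \emph{opposite} inclusion: given $f\in\mathcal I_m(\varphi)_y$ you must place $f$ in $(g_1,\dots,g_N)_y$, and for this it would suffice to know $f\in\mathcal I(\psi)_y$. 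That direction does not follow: it would force $|f|^2\le C\,G$ near $y$, which fails for a general $f$. Strong openness applied to $\tfrac{1}{m}\varphi+\tfrac{m-1}{m}\ln|f|^2$ yields a margin in $\varphi$, not a comparison with the fixed $G$; and if you enlarge $G$ to include $|f|^2$ then the auxiliary weight depends on $y$ through $f$, so you lose the uniform neighborhood. The Nadel-style scheme you outline therefore does not close.

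The paper takes a different route that avoids this circularity. First, when $\varphi$ has \emph{analytic} singularities, a log resolution $\pi:\widetilde U\to U$ allows one to compute $\mathcal I_m(\varphi)=\pi_*\mathcal O(-\sum_i k_iE_i)$ for explicit integers $k_i$, so coherence is immediate in this case. For general $\varphi$, Demailly's analytic approximation produces $\varphi_k\searrow\varphi$ with analytic singularities and $\delta_k\to0$ such that $\mathcal I_m((1+\delta_k)\varphi_k)\subset\mathcal I_m(\varphi)$; the increasing union of these \emph{already coherent} sheaves stabilizes by Noetherianity to a coherent subsheaf. The reverse inclusion uses exactly the rewriting you spotted at the outset: $f\in\mathcal I_m(\varphi)_x$ means $f\in\mathcal I\bigl(\tfrac{1}{m}\varphi+\tfrac{m-1}{m}\ln|f|^2\bigr)_x$, Guan--Zhou gives a $(1+\delta)$-margin on this whole weight, and since $\varphi_k\ge\varphi$ one gets $f\in\mathcal I_m((1+\delta_k)\varphi_k)_x$ whenever $\delta_k<\delta$. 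The crucial structural difference from your scheme is that the comparison sheaves $\mathcal I_m((1+\delta_k)\varphi_k)$ are coherent \emph{independently of} $f$, which is precisely what your construction of $\psi$ via chosen generators lacks.
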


\begin{proof}
We first prove the lemma under the assumption that $\varphi$ has analytic singularities.
In this case,
Let $\pi: \widetilde{U} \rightarrow U$ be a resolution of singularities of $\varphi$, i.e., $\varphi\circ\pi$ can be written locally as
$$\varphi\circ\pi = \sum_i a_i \ln (|s_i|) +O(1),$$
where $s_i$ are holomorphic functions on $\widetilde{U}$ and $\bigcup\limits_i \Div (s_i)$ is normal crossing.
We suppose that $K_{\widetilde{U}}= K_X + \sum\limits_i b_i\cdot  E_i$ and $\sum\limits_i a_i\cdot \Div (s_i) = \sum\limits_i c_i\cdot E_i$.
Let $k_i$ be the minimal number in $\mathbb{Z}^+$ such that $k_i \cdot \frac{2}{m} > \frac{c_i}{m} - 2 b_i -2$.
It is easy to check that $\mathcal{I}_m (\varphi)=\pi_* (\mathcal{O} (-\sum\limits_i k_i\cdot E_i))$.
Therefore $\mathcal{I}_m (\varphi)$ is a coherent sheaf.

We now prove the lemma for arbitrary psh functions. 
Thanks to \cite[15.B]{Dem12}, we can find a sequence of quasi-psh $\varphi_k$ with analytic singularities and a sequence $\delta_k \rightarrow 0^+$,
such that

{\em (i):} $\varphi_k$ decrease to $\varphi$.

{\em (ii):} $\int_{\{\frac{\varphi}{m} < \frac{ (1+\delta_k) \varphi_k}{m} +a_k\}} e^{-\frac{\varphi}{m}}  < +\infty$ 
(cf. \cite[proof of Thm 15.3, Step 2]{Dem12}) for certain constant $a_k$.

\noindent As a consequence, we have $\sI_m ((1+\delta_k) \varphi_k) \subset \sI_m (\varphi)$.
Since we proved that 
$$\sI_m ((1+\delta_k) \varphi_k)$$ 
are coherent, by the Noetherien property of coherent sheaf,
$\bigcup\limits_{k=1}^{+\infty} \sI_m ((1+\delta_k) \varphi_k)$ is also coherent and 
$$\bigcup_{k=1}^{+\infty} \sI_m ((1+\delta_k) \varphi_k)\subset \sI_m (\varphi).$$
To prove the lemma, it is sufficient to prove that for every $f\in  \sI_m (\varphi)$, we can find a $k\in\mathbb{N}$, such that 
$f\in  \sI_m ((1+\delta_k) \varphi_k)$. 

Let $f$ be a holomorphic germ of $(\sI_m (\varphi) )_x$.
Then
$$\int_{U_x} |f|^2 e^{-\frac{\varphi}{m} -\frac{2(m-1)\ln |f|}{m}} <+\infty ,$$
for some neighborhood $U_x$ of $x$.
By \cite{GZ15b}, there exists some $\delta >0$, such that 
$$\int_{U_x} |f|^2 e^{-\frac{(1+\delta)\varphi}{m} -\frac{2(1+\delta)(m-1)\ln |f|}{m}} < +\infty .$$
Replacing $U_x$ by a smaller neighborhood $U_x '$ of $x$, we have 
\begin{equation}\label{addedequ}
\int_{U_x '} |f|^{\frac{2}{m}} e^{-\frac{(1+\delta)\varphi}{m}} < +\infty .
\end{equation}
We take a $k\in\mathbb{N}$, such that $\delta_k <\delta$.
Thanks to $(i)$ and \eqref{addedequ}, we have 
$$\int_{U_x '} |f|^{\frac{2}{m}} e^{-\frac{(1+\delta_k)\varphi_k}{m}} < +\infty .$$
Therefore $f\in  \sI_m ((1+\delta_k) \varphi_k)$ and the lemma is proved.
\end{proof}

We now generalize \cite[Thm 0.1]{BP10} to arbitrary proper K\"ahler fibrations. The proof is almost the same as \cite[Cor 3.7]{GZ15a}.

\begin{thm}
Let $p: X\rightarrow Y$ be a proper fibration between two K\"ahler manifolds and let $\omega$ be a K\"{a}hler metric on $X$.
Let $L\rightarrow X$ be a line bundle endowed with a metric (maybe singular) $h= h_0 \cdot e^{-\varphi}$ such that $i\Theta_{h} (L) \geq 0$
in the sense of current, where $h_0$ is a smooth metric and $\varphi$ is a quasi-psh function on $X$.

Suppose that there exists a generic point $z\in Y$ and a $u\in H^0 (X_z,  (K_{X/Y})^m \otimes L)$ such that 
$$\int_{X_z} |u|_{\omega, h} ^{\frac{2}{m}} d V_{X_z , \omega} < +\infty \qquad\text{and}\qquad u \not\equiv 0.$$
Then the line bundle $( K_{X/Y})^m \otimes L$ admits a metric with positive curvature current. 
Moreover, this metric equals to the fiberwise $m$-Bergman kernel metric (with respect to $h$ ) on the generic fibers of $p$.
\end{thm}

\begin{proof}
By Lemma \ref{coherent}, $p_* (( K_{X/Y})^m \otimes L \otimes \mathcal{I}_m (\varphi))$ is coherent.
Using \cite{Fle81} (cf. also \cite[Thm 10.7, page 47]{BDIP02}), there exists a subvariety $Z$ of $Y$ of codimension at least $1$
such that $p$ is smooth on $Y\setminus Z$ and for every point $t \in Y\setminus Z$, we have
$$\dim H^0 (X_t, ( K_{X/Y})^m \otimes L \otimes \mathcal{I}_m (\varphi)|_{X_t}) 
= \rank p_* (( K_{X/Y})^m \otimes L \otimes \mathcal{I}_m (\varphi)) ,$$
where $\mathcal{I}_m (\varphi)|_{X_t}$ is the restriction of the coherent sheaf $\mathcal{I}_m (\varphi)$ on $X_t$.
By local extension theorem, we know that $\mathcal{I}_m (\varphi |_{X_t}) \subset \mathcal{I}_m (\varphi) |_{X_t}$.
As a consequence, for every Stein neighborhood $U$ of $t \in Y\setminus Z$, the fibration $p: p^{-1} (U) \rightarrow U$
and the point $t$ satisfy the conditions in Corollary \ref{extensionoptimalm}.

Let $h^{(m)}$ be the fiberwise $m$-Bergman kernel metric on $p^{-1}(Y\setminus Z) \rightarrow Y\setminus Z$ 
(cf. construction in the beginning of this subsection).
For every $x\in p^{-1}(Y\setminus Z)$, 
we now estimate the curvature of $h^{(m)}$ near $x$.
Let $e$ be a local coordinate of $ (K_{X/Y})^m \otimes L$ near $x$. 
Let 
\begin{equation}\label{localbergman}
B (z) := \sup \frac{|u^0 (z)|^2}{ (\int_{X_{p(z)}} |u|_{\omega,h} ^{\frac{2}{m}}  d V_{X_{p(z)}, \omega})^{m}} ,
\end{equation}
where $u= u^0 \cdot e$ and the "sup" is taken over all sections $u\in H^0(X_{p(z)}, (K_{X/Y})^m \otimes L\otimes \mathcal{I}_m (\varphi) )$.
Thanks to \eqref{dualbergman}, 
to prove that the curvature of $h^{(m)}$ is positive near $x$, 
it is sufficient to prove that
$\ln B (z)$ is $\psh$ near $x$.

For every fixed point $z$ near $x$, we can find a section
$u_1\in H^0(X_{p(z)}, (K_{X/Y})^m \otimes L\otimes \mathcal{I}_m (\varphi) )$ 
such that 
$$B (z) = \frac{|u_1 ^0 (z)|^2}{ (\int_{X_{p(z)}} |u_1|_{\omega,h} ^{\frac{2}{m}}  d V_{X_{p(z)}, \omega})^{m}} .$$
Let $\Delta_r$ be a one dimensional radius $r$ disc in $Y$ centered at $p(z)$,
and $\Delta_r '$ be a one dimensional disc in $X$ passing through $z$ and $p (\Delta_r ') =\Delta_r$.
Thanks to Proposition \ref{extensionoptimalm}, 
there exists an extension of $u_1$: $ U_1\in H^0 (p^{-1}(\Delta_r),  (K_X)^m \otimes L\otimes \mathcal{I}_m (\varphi) )$, 
such that
\begin{equation}\label{optimalfinal}
 \frac{1}{\pi r^2}\int_{p^{-1}(\Delta_r) } |U_1 |_{\omega, h} ^{\frac{2}{m}} dV_{X,\omega}
 \leq  \int_{X_{p(z)}} |u_1|_{\omega, h} ^{\frac{2}{m}} d V_{X_{p(z)}, \omega} .
\end{equation}
Set $\widetilde{u}_1 :=  U_1 / (d t)^{m} \in H^0 (p^{-1}(\Delta_r),  (K_{X/Y})^m \otimes L\otimes \mathcal{I}_m (\varphi) )$
and $\widetilde{u}_1 ^0 := \frac{\widetilde{u}_1 }{e}$, where $t$ is coordinate of $\Delta_r$. 
By the definition of $B (z)$, we have
$$\frac{1}{\pi r^2} \int_{\Delta_r '} \ln B(x) p^* (d' t \wedge d'' t)
\geq \frac{1}{\pi r^2} \int_{\Delta_r '}\ln \frac{|\widetilde{u}_1 ^0 (x)|^2}{(\int_{X_{p(x)}} |\widetilde{u}_1 (x)|_{\omega, h} ^{\frac{2}{m}}  d V_{X_{p(z)}, \omega} )^{m}} p^* (d' t \wedge d'' t)$$
$$\geq \frac{1}{\pi r^2}\int_{\Delta_r '}\ln |\widetilde{u}_1 ^0|^2 p^* (d' t \wedge d'' t) 
-\frac{m}{\pi r^2}\ln\int_{p^{-1}(\Delta_r) } |U_1|_{\omega, h} ^{\frac{2}{m}} d V_{X,\omega} .$$
Combining this with \eqref{optimalfinal} and the holomorphicity of $\widetilde{u}_1 ^0$, 
we obtain
$$\frac{1}{\pi r^2} \int_{\Delta_r '} \ln B(x) p^* (d' t \wedge d'' t)  \geq \ln B(z) .$$
Therefore, $\ln B (x)$ is $\psh$ in the horizontal direction.
By the convexity of $\ln |u^0 (x)|$ and the construction of $\ln B (x)$, 
$\ln B (x)$ is also $\psh$ in the fiberwise direction.
Therefore $\ln B (x)$ is $\psh$ on $p^{-1} (Y\setminus Z)$ and the curvature of $h^{(m)}$ is semi-positive on 
$p^{-1} (Y\setminus Z)$ (in the sense of currents).

Using the arguments in \cite[A.2]{BP10},
we now prove that $h^{(m)}$ can be extended to the whole $X$.
We first express $h^{(m)}$ locally as the potential form $e^{-\varphi_{X/Y}}$, where $\varphi_{X/Y}$ is a quasi-psh function outside the subvariety
$p^{-1}(Z)$. By the standard results in pluripotential theory, to prove that $h^{(m)}$ can be extended to $X$,
it is sufficient to prove the existence of a uniform constant $C$ such that 
\begin{equation}\label{extenionbound}
\varphi_{X/Y} \leq C \qquad \text{ on } X\setminus p^{-1}(Z).
\end{equation}
Let $U$ be a small open set in $X$. 
Let $B$ be the function on $U\setminus p^{-1}(Z)$ defined by \eqref{localbergman}.
Thanks to \eqref{dualbergman}, to prove \eqref{extenionbound}, it is equivalent to prove that
$B$ is uniformly bounded on $U\setminus p^{-1}(Z)$.
For every $z\in U\setminus p^{-1}(Z)$, we can find a $u_2 \in H^0(X_{p(z)}, (K_{X/Y})^m \otimes L\otimes \mathcal{I}_m (\varphi) )$ such that 
$$B (z) = |u_2 ^0 (z)|^2 \qquad\text{and}\qquad  
\int_{X_{p(z)}} |u_2 |_{\omega, h} ^{\frac{2}{m}} d V_{X_{p(z)}, \omega} =1 ,$$
where $u_2 ^0 :=\frac{u_2}{e}$.
Using Proposition \ref{extensionoptimalm}, we can find an extension $\widetilde{u}_2$ of $u_2$, such that
$$\int_{p^{-1}(p (U) ) }  |\widetilde{u}_2 |_{\omega, h} ^{\frac{2}{m}}  d V_{X, \omega}\leq C_U,$$
where the constant $C_U$ depends only on $U$.
By mean value inequality, we know that $ |u_2 ^0 (z) |$ is controlled by a constant depending only on $C_U$.
The theorem is thus proved.

\end{proof}

\section{Appendix}

For the reader's convenience, we give the proof of \eqref{l2estimatestep2} and \eqref{l2estimateStep2},
which is a rather standard estimate (cf. \cite[Prop. 12.4, Remark 12.5]{Dem12}, \cite{DP03} or \cite{Yi12}). 

Set $g:= g_m$, $\eta:= \eta_\ls$, $B:= B_{\ls, k}$ and $\delta :=\delta_k$ for simplicity.
Let $Y_k$ be a subvariety of $X$ such that $\varphi_k$ is smooth outside $Y_k$.
Then there exists a complete K\"ahler metric $\omega_1$ on $X\setminus Y_k$.
Set $\omega_s := \omega+ s\omega_1$.
Then $\omega_s$ is also a complete K\"ahler metric on $X\setminus Y_k$ for every $s> 0$. 

We apply the twist $L^2$-estimate (cf. \cite[12.A, 12.B]{Dem12}) 
for the line bundle $(L, \widetilde{h}_k)$ on $(X\setminus Y_k, \omega_s )$.
Thanks to \eqref{importantetimate} and \cite[Lemma 4.1]{GZ15a}, 
for every smooth $(n,1)$-form $v$ with compact support, 
we have 
\begin{equation}\label{appendix}
| \langle g ,v \rangle_{\omega_s} | ^2
\end{equation}
$$\leq (\int_{X\setminus Y_k} \langle (B +\frac{2C\cdot m}{k})^{-1}g, g\rangle dV_{\omega_s}) \cdot
(\| (\eta+\lambda)^{\frac{1}{2}} D''^* v\|_{\omega_s} ^2 +\frac{2C\cdot m}{k} \int_{X\setminus Y_k} \langle  v, v\rangle dV_{\omega_s})$$
Set $H_1 :=\|\cdot\|_{L^2} $, where the $L^2$-norm $\|\cdot\|_{L^2}$ is defined with respect to the metrics 
$\omega_s$ and $(L, \widetilde{h}_k)$.
Let $H_2 $ be a Hilbert space where the norm is defined by
$$\|f\|_{H_2} ^2 :=  \frac{2C\cdot m}{k} \int_{X\setminus Y_k}   |f|_{\widetilde{h}_k} ^2  dV_{\omega_s} .$$
By \eqref{appendix} and the Hahn-Banach theorem, we can construct a continuous linear map (cf. for example \cite[5.A]{Dem12})
$$H_1 \oplus H_2 \rightarrow \mathbb{C},$$ 
which is an extension of the application
$$((\eta+\lambda )^{\frac{1}{2}} D''^* v , v)\rightarrow \langle g ,v\rangle_{\omega_s} .$$
Therefore, there exist $f$ and $h$ such that
$$\langle g, v\rangle_{\omega_s} = \langle f, (\eta+\lambda )^{\frac{1}{2}} D''^* v\rangle_{\omega_s} + \frac{2C\cdot m}{k} \langle  h, v\rangle_{\omega_s}$$
and
$$\|f\|_{\omega_s} ^2 +\frac{2C\cdot m}{k}\| h\|_{\omega_s} ^2\leq \int_X (\langle B + \frac{2C\cdot m}{k} )^{-1} g, g\rangle dV_{\omega_s}$$
Let $\beta :=  2C(\frac{m}{k})^{\frac{1}{2}}\cdot h$ and $\gamma : = (\eta+\lambda)^{\frac{1}{2}} f$. 
Then
$$g= D'' \gamma + (\frac{m}{k})^{\frac{1}{2}} \beta$$
and
$$ \|\frac{\gamma}{(\lambda +\eta )^{\frac{1}{2}}}\|^2 _{( X\setminus Y_k, \omega_s )} +\frac{1}{2C}\|\beta\|^2 _{ ( X\setminus Y_k, \omega_s )} \leq \int_{X\setminus Y_k} \langle (B + \frac{2C\cdot m}{k})^{-1}g, g\rangle dV_{\omega_s}$$
Then \eqref{l2estimatestep2} and \eqref{l2estimateStep2} are proved by letting $s\rightarrow 0^+$.


\begin{thebibliography}{666}

\bibitem[BDIP02]{BDIP02} 
 Bertin, Jos{\'e} and Demailly, Jean-Pierre and Illusie, Luc
              and Peters, Chris: {\em Introduction to {H}odge theory}
  SMF/AMS Texts and Monographs 8
  

\bibitem[Ber05]{Ber05} Berndtsson, Bo: {\em Integral formulas and the Ohsawa-Takegoshi extension theorem}
Sci. China Ser. A, 48 suppl., 61-73, 2005

\bibitem[BL16]{BL16} 
Berndtsson, Bo and Lempert, L\'{a}szl\'{o}:
 {\em A proof of the Ohsawa–Takegoshi theorem with sharp estimates}  J. Math. Soc. Japan
Volume 68, Number 4 (2016), 1461-1472.

\bibitem[BP08]{BP08} 
Berndtsson, Bo and P\u{a}un, Mihai:
 {\em Bergman kernels and the pseudoeffectivity of relative
              canonical bundles}  Duke Math. J.
  145 (2): 341--378, 2008
  
   

\bibitem[BP10]{BP10} 
Berndtsson, Bo and P\u{a}un, Mihai: {\em Bergman kernels and subadjunction} arXiv: 1002.4145v1 


\bibitem[Blo13]{Blo13} B{\l}ocki, Zbigniew:
 {\em Suita conjecture and the {O}hsawa-{T}akegoshi extension
              theorem},  Invent. Math.  193 (1): 149--158,  2013
 
\bibitem[Che11]{Che11}
Chen, Bo-Yong:
{\em A simple proof of the Ohsawa-Takegoshi extension theorem},
ArXiv e-prints 1105.2430



\bibitem[Dem00]{Dem00}
 Demailly, Jean-Pierre: {\em Multiplier ideal sheaves and analytic methods in algebraic
              geometry},
School on {V}anishing {T}heorems and {E}ffective {R}esults in
              {A}lgebraic {G}eometry ({T}rieste, 2000),
ICTP Lect. Notes 6,  1--148




\bibitem[Dem12]{Dem12} Demailly, Jean-Pierre: {\em Analytic methods in algebraic geometry},
Surveys of Modern Mathematics 1,  International Press, 2012

\bibitem[Dem15]{Dem15}
Demailly, Jean-Pierre:  
\textit{Extension of holomorphic functions defined on non reduced analytic subvarieties, }
arXiv:1510.05230v1, 
Advanced Lectures in Mathematics Volume 35.1, 
the legacy of Bernhard Riemann after one hundred and fifty years, 2015. 


\bibitem[DP03]{DP03} Demailly, Jean-Pierre and Peternell, Thomas:
 {\em A {K}awamata-{V}iehweg vanishing theorem on compact {K}\"ahler
              manifolds}  Journal of Differential Geometry,  63 (2),  2003,  231--277

\bibitem[Fle81]{Fle81}
Flenner, Hubert: {\em Ein {K}riterium f\"ur die {O}ffenheit der {V}ersalit\"at},
  Math. Z.,  178 (4),  1981,  449--473
  
  
  
\bibitem[GZ15a]{GZ15a} Guan, Qi'an and Zhou, Xiangyu: 
{\em A solution of $L^{2}$ extension problem with optimal estimate and applications }   Annals. math, 181 (3), 2015, 1139-1208
 arXiv:1310.7169
 
 

\bibitem[GZ15b]{GZ15b} Guan, Qi'an and Zhou, Xiangyu:  
{\em A proof of Demailly’s strong openness conjecture} Annals. math.182 (2), 2015, 605-616

\bibitem[Hie14]{Hie14}
Hi\^{e}p, Pham Ho\`{a}ng: {\em The weighted log canonical threshold}
Comptes Rendus Mathematique. 352 (4), 2014, 283–288


\bibitem[Kaw85]{Kaw85} Kawamata, Yujiro: {Pluricanonical systems on minimal algebraic varieties}  Invent. Math.  79 (3), 1985,  567--588

\bibitem[Man93]{Man93} Manivel, Laurent:
{\em Un th\'eor\`eme de prolongement {$L^2$} de sections
              holomorphes d'un fibr\'e hermitien}  Math. Z.,  212 (1),  1993,  107--122

\bibitem[Ohs88]{Ohs88}Ohsawa, Takeo:
      {\em On the extension of {$L^2$} holomorphic functions. {II}}, Publ. Res. Inst. Math. Sci.,
   24 (2),  1988,  265--275
   
   

\bibitem[OT87]{OT87}  Ohsawa, Takeo and Takegoshi, Kensho:
{\em On the extension of {$L^2$} holomorphic functions},
Math. Z. 195 (2), 1987,  197--204



\bibitem[Siu04]{Siu04}
Siu, Yum-Tong:
{\em Extension of twisted pluricanonical sections with plurisubharmonic
    weight and invariance of semipositively twisted plurigenera for manifolds
    not necessarily of general type.} Complex Geometry: Collection of Papers Dedicated to Hans Grauert, 2002,
    223--277
    

\bibitem[Tsu07]{Tsu07} Tsuji, Hajime:
{\em Extension of log pluricanonical forms from subvarieties}  arXiv 0709.2710

\bibitem[Yi12]{Yi12}
Yi, Li:  {\em An Ohsawa-Takegoshi theorem on compact Kähler manifolds} 
Science China Mathematics 57 (1), 2014, 9–30


\bibitem[ZGZ12]{ZGZ12} Zhu, Langfeng and Guan, Qi'an and 
Zhou, Xiangyu:
{\em On the Ohsawa–Takegoshi L2 extension theorem and the Bochner–Kodaira identity with non-smooth twist factor} J. Math. Pures Appl. 97  (6),  2012, 579–601

\end{thebibliography}
\end{document}